\numberwithin{equation}{section}
\newtheorem{theorem}{Theorem}[section]
\newtheorem{lemma}[theorem]{Lemma}
\theoremstyle{definition}
\theoremstyle{remark}
\newtheorem{remark}[theorem]{Remark}
\newcommand{\al}{\alpha}
\newcommand{\be}{\beta}
\newcommand{\ep}{\varepsilon}
\newcommand{\ga}{\gamma}
\newcommand{\la}{\lambda}
\newcommand{\de}{\delta}
\newcommand{\om}{\omega}
\newcommand{\rd}{\mathbb{R}^d}
\newcommand{\hrd}{H^{\alpha}(\mathbb{R}^{d})}
\newcommand{\hprd}{H^{\alpha}_p(\mathbb{R}^d)}
\newcommand{\dhrd}{\dot{H}^{\alpha}(\mathbb{R}^{d})}
\newcommand{\xy}{\frac{1}{|x-y|^{\gamma}}}
\begin{document}
\title[ Nonlinear fractional Schr\"odinger equations ]
{Existence and stability of standing waves for nonlinear fractional Schr\"odinger equations
with Hartree type nonlinearity}

\author[D. Wu]
{Dan Wu}

\address{Dan Wu \newline
Academy of Mathematics and System Sciences, Beijing 100190, China.}

\email{danwu@amss.ac.cn}

\thanks{}
\subjclass[2000]{35Q55}
\keywords{Fractional nonlinear Schr\"odinger equation; Hartree; Standing wave; Stability;
concentration-compactness}

\begin{abstract}
In this paper, we consider the nonlinear fractional Schr\"odinger
equations with Hartree type nonlinearity. We obtain the existence of
standing waves by studying the related constrained minimization
problems via applying the concentration-compactness principle. By
symmetric decreasing rearrangements, we also show that the standing
waves, up to a translations and phases, are positive symmetric
nonincreasing functions. Moreover, we prove that the set of
minimizers is a stable set for the initial value problem of the
equations, that is, a solution whose initial data is near the set
will remain near it for all time.
\end{abstract}

\maketitle

\section{Introduction}

We consider the following fractional nonlinear Schr\"odinger equation with Hartree type nonlinearity
\begin{equation}\label{nls}
i\psi_t+(-\Delta)^{\al}\psi-(|\cdot|^{-\ga}*|\psi|^2)\psi=0,
\end{equation}
where $0< \alpha<1$, $0<\ga<2\al$ and $\psi(x,t)$ is a
complex-valued function on $\rd\times\mathbb R$, $d\geq 2$.
The fractional Laplacian $(-\Delta)^{\al}$ is a non-local operator defined as
\begin{equation}\label{fra_Lap}
\mathcal{F}[(-\Delta)^{\al}\psi](\xi)=|\xi|^{2\al}\mathcal{F}\psi(\xi),
\end{equation}
where the Fourier transform is given by
\begin{equation}
\mathcal{F}\psi(\xi)=\frac{1}{(2\pi)^{2d}}\int_{\rd}\psi(x)e^{-i\xi\cdot x}dx.
\end{equation}

The fractional Schr\"odinger equation plays
a significant role in the theory of fractional quantum mechanics.
It was formulated by N. Laskin \cite{laskin2000fractional}\cite{laskin2000frac:levy}\cite{laskin2002fractional}
as a result of extending the Feynman path integral
from the Brownian-like to L\'evy-like quantum mechanical paths.
The L\'evy processes, occuring widely in physics, chemistry and biology,
lead to equations with the fractional Laplacians which have been recently studied by
\cite{cabre2010nonlinear}\cite{guan2006reflected}\cite{valdinoci2009long}.
When $\al=\frac12$, NLS \eqref{nls} can be used to describe the dynamics of pseudo-relativistic boson stars in the mean-field limit, see \cite{frank2009ground}.
When $\al=1$, the L\'evy motion becomes Brownian motion and the fractional
Schr\"odinger equation turns to be the well-known classical nonlinear Schr\"odinger equation
which has been studied by many authors, see for instance
\cite{cazenave2003semilinear}\cite{cazenave1982orbital}\cite{lieb1977existence}\cite{lions1987solutions}.

Recently, the fractional nonlinear Schr\"odinger equations with
power type nonlinearity have been studied by
\cite{guo2008existence}\cite{guo2012existence}\cite{guo2010global}.
In this paper, we consider Hartree type nonlinearity. It has been
showed in \cite{cho2012cauchy} that the equation \eqref{nls} is
locally well-posed in $\hrd$ and globally well-posed under some
conditions. In view of the scaling invariance, we know that the
equation \eqref{nls} is mass-critical if $\ga=2\al$ and mass-subcritical
if $\ga<2\al$. For mass-critical case $\ga=2\al$, \cite{cho2012finite}
and \cite{cho2012profile} investigate the blowup phenomena of NLS
\eqref{nls} with radial data. The aim of this paper is to
investigate existence and  stability of standing waves of NLS
\eqref{nls} in mass-subcritical case.

A standing wave of NLS \eqref{nls} means a solution of the special
form $ \psi(x,t)=e^{i\om t}u(x) $, where $ \om\in\mathbb R$ is a frequency.
In order to study the existence and stability of standing waves
to NLS \eqref{nls}, we first look for $(\om,u)$ satisfying the stationary equation
\begin{equation}\label{gq}
(-\Delta)^{\al}u-(|\cdot|^{-\ga}*|u|^{2})u=\om u  \quad \mathrm{in}\  \rd ,
\end{equation}
where $u(x)$ is complex-valued. For studying the
existence of solutions to \eqref{gq}, by the variational method, we
can consider the following constrained minimization problem:
\begin{equation}\label{eq}
E_q :=\inf \{E(u);\ u \in \hrd ,\  M(u)=q\},
\end{equation}
where the mass is defined as
\begin{equation}\label{mass}
M(u)=\int_{\rd} |u(x)|^2 dx,
\end{equation}
and the energy is
\begin{equation}\label{energy}
E(u)=\frac12\int_{\rd} |(-\Delta)^{\frac\al2}u(x)|^2 dx
-\frac14\iint_{\rd\times\rd}\xy|u(x)|^{2}|u(y)|^2 dxdy.
\end{equation}
\begin{remark}
N. Laskin \cite{laskin2002fractional} showed the hermiticity of the fractional Schr\"odinger operator and established the conservation laws of the mass and the energy.
\end{remark}
We will denote the set of minimizers of problem \eqref{eq} by
\[
G_q:= \{u \in H^{\al} ;\ E(u)=E_{q},\  M(u)=q \}.
\]

Let $S$ denote the set of the symmetric decreasing functions in
$\hrd$, that is,
\begin{equation}\label{}
S=\{u\in\hrd;\ u\geq0\ \mathrm{and}\ u(x)\leq u(y)\ \mathrm{if}\
|x|\geq|y|\},
\end{equation}
and let
\begin{equation}\label{}
S^{'}=\{u\in\hrd;\ u(x-y)=v(y)\ \mathrm{a.e.\ for\ some}\ v\in S\
\mathrm{and}\ y\in\rd\},
\end{equation}
the set of translates (a.e.) of functions in $S$. Two functions $u$
and $v$ in $S^{'}$ are said to be equicentered if $u(x-y)=v(y)$ a.e.
for some $v\in S$ and $y\in\rd$.

Throughout this paper, we always denote
$\|\cdot\|=\|\cdot\|_{H^{\alpha}(\rd)}$ and
$\|\cdot\|_p=\|\cdot\|_{L^p(\rd)}$ for simplicity.

Our main results in this paper are the following.

\begin{theorem}[Existence of standing waves]\label{T:exi}
Let $d\ge 2$, $0<\al<1$, and $\ga<2\al$. If $\{u_n\}$ is a minimizing sequence of problem \eqref{eq},
then there exists a sequence $\{y_{n}\} \subset \rd$ such that
$\{u_n(\cdot-y_n)\}$ contents a convergent subsequence in $H^{\al}(\rd)$.
In particular, there exists a minimizer for problem \eqref{eq},
which implies $G_q$ is not a empty set.
Moreover, we have
\begin{equation}\label{min=0}
\lim_{n\rightarrow\infty}\inf_{g \in G_{q}}\|u_{n}-g\|=0.
\end{equation}
\end{theorem}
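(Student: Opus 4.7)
The plan is to apply a concentration--compactness argument in the spirit of Lions. The first preparatory step is to show that $-\infty<E_q<0$ and that every minimizing sequence $\{u_n\}$ is bounded in $\hrd$. By the Hardy--Littlewood--Sobolev inequality applied with $f=g=|u|^{2}$, followed by interpolation between $L^{2}$ and $L^{2d/(d-2\al)}$ and the fractional Sobolev embedding $\hrd\hookrightarrow L^{2d/(d-2\al)}$, one obtains
\begin{equation*}
\iint_{\rd\times\rd}\xy|u(x)|^{2}|u(y)|^{2}\,dx\,dy \;\le\; C\,\|u\|_{2}^{\,4-\ga/\al}\,\|(-\Delta)^{\al/2}u\|_{2}^{\,\ga/\al}.
\end{equation*}
The exponent $\ga/\al<2$ is strict by the subcritical hypothesis $\ga<2\al$, so with $\|u_n\|_{2}^{2}=q$ held fixed the Hartree term grows subquadratically in the kinetic norm; this yields both a lower bound for $E_q$ and a uniform $H^{\al}$-bound on $\{u_n\}$. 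The negativity $E_q<0$ is obtained by testing on a fixed $v\in\hrd$ with $M(v)=q$ and the mass-preserving scaling $v_\la(x)=\la^{d/2}v(\la x)$: the identity $E(v_\la)=\tfrac{\la^{2\al}}{2}\|(-\Delta)^{\al/2}v\|_{2}^{2}-\tfrac{\la^{\ga}}{4}\iint\xy|v|^{2}|v|^{2}\,dxdy$ is negative for $\la$ sufficiently small because $\ga<2\al$.

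With these preparations I apply the concentration--compactness lemma to the sequence of densities $\rho_n=|u_n|^{2}$, which is bounded in $L^{1}(\rd)$ with mass $q$. Vanishing is excluded by the standard Lions-type lemma: if $\rho_n$ vanishes then $\|u_n\|_{p}\to 0$ for every $2<p<2d/(d-2\al)$, hence by HLS the Hartree term tends to zero and $\liminf E(u_n)\ge 0>E_q$, a contradiction. Dichotomy is excluded through the strict subadditivity $E_q<E_{q_1}+E_{q-q_1}$ for all $0<q_1<q$; this is the main technical obstacle. It will be established via the scaling inequality $E_{\theta q}<\theta E_q$ for every $\theta>1$: given $u$ with $M(u)=q$ close to realizing $E_q$ and setting $w=\theta^{1/2}u$, one has $M(w)=\theta q$ and
\begin{equation*}
E(w)-\theta E(u)=\frac{\theta(1-\theta)}{4}\iint_{\rd\times\rd}\xy|u(x)|^{2}|u(y)|^{2}\,dxdy,
\end{equation*}
which is strictly negative with uniform magnitude since $E_q<0$ forces the Hartree term of a minimizing sequence to stay bounded below by $-4E_q+o(1)>0$.

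Only the compactness alternative survives, producing translates $\{y_n\}\subset\rd$ for which $\{|u_n(\cdot-y_n)|^{2}\}$ is tight. After extracting a subsequence, $u_n(\cdot-y_n)\rightharpoonup u$ weakly in $\hrd$; tightness combined with local Rellich compactness upgrades this to strong convergence in $L^{p}$ for all $2\le p<2d/(d-2\al)$. Weak lower semicontinuity of the kinetic term together with continuity of the Hartree term in these $L^{p}$ norms yields $E(u)\le\liminf E(u_n)=E_q$ and $M(u)=q$, so $u\in G_q$ and $G_q\neq\emptyset$. Moreover, equality in the semicontinuity forces $\|(-\Delta)^{\al/2}u_n(\cdot-y_n)\|_{2}\to\|(-\Delta)^{\al/2}u\|_{2}$, which together with the weak convergence and $L^{2}$-norm convergence produces strong convergence in $\hrd$.

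Finally, the assertion \eqref{min=0} is deduced by contradiction. If it failed, a subsequence $\{u_{n_k}\}$ would satisfy $\inf_{g\in G_q}\|u_{n_k}-g\|\ge\de>0$. But $\{u_{n_k}\}$ is itself a minimizing sequence, so the first part of the theorem provides translates $\tilde y_k$ and a further subsequence along which $u_{n_k}(\cdot-\tilde y_k)\to g^{*}$ in $\hrd$ for some $g^{*}\in G_q$. Since $G_q$ is translation invariant, $g^{*}(\cdot+\tilde y_k)\in G_q$, and $\|u_{n_k}-g^{*}(\cdot+\tilde y_k)\|=\|u_{n_k}(\cdot-\tilde y_k)-g^{*}\|\to 0$, contradicting the lower bound $\de$.
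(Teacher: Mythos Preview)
Your overall strategy---Lions concentration--compactness, ruling out vanishing via the size of the Hartree term, ruling out dichotomy via strict subadditivity, then upgrading weak to strong convergence by matching norms---is the same as the paper's, and the final contradiction argument for \eqref{min=0} is identical. The bounds $-\infty<E_q<0$ and the $H^\al$-boundedness of minimizing sequences are done in the paper via Hardy's inequality rather than HLS, but this is an inessential variation; likewise the paper proves strict subadditivity through an exact scaling identity $E_{\la q}=\la^{(8\al-2\ga)/(2\al-\ga)}E_q$ (using $u_\la(x)=\la^{\ga_1}u(\la^{\ga_2}x)$ with carefully chosen $\ga_1,\ga_2$), whereas your mass-only scaling $w=\theta^{1/2}u$ combined with the lower bound on the Hartree term also works.

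Where your proposal is thin is precisely the step you do \emph{not} flag as difficult. Strict subadditivity $E_q<E_{q_1}+E_{q-q_1}$ is only one half of excluding dichotomy; the other half is showing that \emph{if} dichotomy occurs with splitting mass $\be\in(0,q)$ then $E_\be+E_{q-\be}\le E_q$. You invoke ``the concentration--compactness lemma'' as a black box for this, but in the fractional setting the splitting estimate is not free: one must localize $u_n$ by smooth cutoffs $\phi_r,\tilde\phi_r$ and control $\|(-\Delta)^{\al/2}(\phi_r u_n)\|_2^2+\|(-\Delta)^{\al/2}(\tilde\phi_r u_n)\|_2^2-\|(-\Delta)^{\al/2}u_n\|_2^2$, which for a nonlocal operator does not reduce to a pointwise identity. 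The paper handles this explicitly (its Lemma~3.6) using Kato--Ponce commutator estimates to show the discrepancy is $O(\ep)$ for $r$ large, and separately controls the Hartree cross terms. This localization is the genuine technical obstacle specific to $(-\Delta)^\al$, and your write-up should supply it rather than absorb it into an appeal to Lions.
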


\begin{theorem}[]\label{T:pro}
The standing waves obtained in Theorem \ref{T:exi} satisfy the
following properties:
\begin{enumerate}
\item The standing waves are continuous,
in particular, $G_q\subset C^{[2\al],2\al-[2\al]}(\rd)$, where
$[2\al]$ is the integer part of $2\al$;
\item If $g\in G_q$, then $|g|\in G_q$ and $|g|>0$ on $\rd$;
\item The standing waves are symmetric decreasing after modified translations and phases,
that is, $G_q\subset\{u;\ e^{i\theta}u(x-y)=v(y)\ \mathrm{a.e.\ for\
some}\ v\in S,\ \theta\in \mathbb{R}\ \mathrm{and}\ y\in\rd\}$.
\end{enumerate}

\end{theorem}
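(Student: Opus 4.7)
The plan is to handle the three assertions in sequence, exploiting the Euler--Lagrange equation that any $g\in G_q$ satisfies. As $g$ is a constrained critical point, there is a Lagrange multiplier $\om\in\mathbb R$ with
\begin{equation*}
(-\Delta)^{\al}g - (|\cdot|^{-\ga}\ast|g|^{2})g = \om g
\end{equation*}
in the distributional sense, and this identity underlies all three parts.

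For part (1) I would bootstrap the regularity. Fractional Sobolev embedding $\hrd\hookrightarrow L^{2d/(d-2\al)}$ places $|g|^{2}$ in $L^{d/(d-2\al)}$, and since $\ga<2\al<d$, splitting into the near and far parts of the convolution shows that the Hartree potential $V:=|\cdot|^{-\ga}\ast|g|^{2}$ is bounded on $\rd$. Rewriting the equation as $(-\Delta)^{\al}g=(V+\om)g$, standard elliptic regularity for the fractional Laplacian on Bessel potential spaces lifts $g$ to $W^{2\al,p}(\rd)$ for every sufficiently large $p$, and the fractional Sobolev--Morrey embedding then delivers $g\in C^{[2\al],2\al-[2\al]}(\rd)$.

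For part (2) I would use the singular-integral representation of the fractional seminorm,
\begin{equation*}
\|(-\Delta)^{\al/2}g\|_{2}^{2}=c_{d,\al}\iint_{\rd\times\rd}\frac{|g(x)-g(y)|^{2}}{|x-y|^{d+2\al}}\,dx\,dy,
\end{equation*}
together with the pointwise bound $||g(x)|-|g(y)||\le|g(x)-g(y)|$. This yields $\||g|\|\le\|g\|$, while $M(|g|)=M(g)=q$ and the Hartree term depends only on $|g|$; hence $E(|g|)\le E(g)=E_{q}$, so $|g|\in G_q$. Applying part (1) to $|g|$, it is continuous, nonnegative, nontrivial, and satisfies the Euler--Lagrange equation pointwise. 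A strong maximum principle for $(-\Delta)^{\al}$ (Silvestre, Cabr\'e--Sire) then forbids zeros of $|g|$ and gives $|g|>0$ on $\rd$.

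For part (3) I would rearrange $|g|$ symmetrically: writing $g^{*}$ for the symmetric decreasing rearrangement of $|g|$, the fractional P\'olya--Szeg\H{o} inequality gives $\|(-\Delta)^{\al/2}g^{*}\|_{2}\le\|(-\Delta)^{\al/2}|g|\|_{2}$, while Riesz's rearrangement inequality applied to $(|g|^{2},|g|^{2},|\cdot|^{-\ga})$ does not decrease the Hartree term. Since $M(g^{*})=q$, this yields $E(g^{*})\le E(|g|)=E_{q}$ and forces equality throughout. Because $|x|^{-\ga}$ is strictly radially decreasing, Lieb's sharp equality case in Riesz's inequality produces $y_{0}\in\rd$ with $|g|(\cdot)=g^{*}(\cdot-y_{0})$. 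For the phase, the identity
\begin{equation*}
|g(x)-g(y)|^{2}-(|g(x)|-|g(y)|)^{2}=2\bigl(|g(x)||g(y)|-\operatorname{Re}(g(x)\overline{g(y)})\bigr)
\end{equation*}
combined with $\|g\|=\||g|\|$ enforces $\operatorname{Re}(g(x)\overline{g(y)})=|g(x)||g(y)|$ for a.e.\ $(x,y)$; strict positivity of $|g|$ from part (2) then forces $g/|g|$ to be a.e.\ a single constant $e^{i\theta}$, so $e^{-i\theta}g(\cdot+y_{0})=g^{*}\in S$, which is the desired form.

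The heart of the matter is part (3): producing a symmetric minimizer $g^{*}\in G_q$ is a straightforward use of rearrangement inequalities, but concluding that $g$ \emph{itself} is already a phase-shifted translate of $g^{*}$ truly requires the sharp equality characterization in Riesz's inequality, not just the inequality. The phase rigidity, though delicate, becomes routine once strict positivity from part (2) is available.
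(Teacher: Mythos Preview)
Your proposal is correct and follows the same overall strategy---bootstrap regularity for (1), diamagnetic/rearrangement inequalities for (2) and (3)---but departs from the paper at two points worth noting. First, you obtain $|g|\in G_q$ directly from the diamagnetic inequality $\||g|\|_{\dot H^\al}\le\|g\|_{\dot H^\al}$ and only afterwards invoke the strict Riesz equality case to place $|g|$ in $S'$; the paper reverses this order, first forcing $|g|\in S'$ by contradiction with Lemma~\ref{L:Riesz_sv} and then reading off $E(|g|)=E(g^*)=E_q$. Second, and more substantively, your phase rigidity argument is genuinely different: you exploit the equality $\|g\|_{\dot H^\al}=\||g|\|_{\dot H^\al}$ in the singular-integral representation to deduce $\operatorname{Re}(g(x)\overline{g(y)})=|g(x)||g(y)|$ a.e., and then strict positivity of $|g|$ forces $g/|g|$ constant. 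The paper instead argues at the level of the Euler--Lagrange equation: it writes $g=u+iv$, observes that $u$, $v$, and $|g|$ all solve the same linear equation $(-\Delta)^\al h-(|\cdot|^{-\ga}*|g|^2)h=\om h$, claims that any nontrivial real solution must be sign-definite (by the pointwise fractional Laplacian formula), and concludes that $u$ and $v$ are scalar multiples of $|g|$. Your route is cleaner and self-contained, sidestepping the somewhat delicate step of establishing sign-definiteness for real solutions that are not a priori nonnegative; the paper's route has the compensating feature of extracting the phase purely from the PDE structure without revisiting the variational inequalities.
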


\begin{theorem}[$\hrd$-stable]\label{T:sta}
Under the assumptions of Theorem \ref{T:exi},
the set $G_q$ is $\hrd$-stable with respect to NLS \eqref{nls}, that is,
for every $\ep>0$, there exists $\de>0$ such that
if $u \in C(\mathbb{R},\hrd)$ is a solution to NLS \eqref{nls}
with the initial data $u_0$ satisfying
\[
\inf_{g \in G_q}\|u_0-g\|<\de,
\]
then for all $t > 0$, we have
\[
\inf_{g \in G_q}\|u(\cdot, t)-g\|<\ep.
\]
\end{theorem}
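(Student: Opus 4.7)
The plan is to argue by contradiction along the classical Cazenave--Lions orbital stability scheme, adapted to the fractional Hartree setting, with the compactness of minimizing sequences up to translation (Theorem \ref{T:exi}, in particular the conclusion \eqref{min=0}) as the decisive input. Suppose $G_q$ is not $\hrd$-stable. Then there exist $\ep_0>0$, initial data $u_{0,n}\in\hrd$ and times $t_n>0$ such that
\begin{equation*}
\inf_{g\in G_q}\|u_{0,n}-g\|\to 0 \quad \text{while} \quad \inf_{g\in G_q}\|\psi_n(\cdot,t_n)-g\|\geq \ep_0,
\end{equation*}
where $\psi_n\in C(\mathbb{R},\hrd)$ is the solution of NLS \eqref{nls} with datum $u_{0,n}$ (which exists globally by the well-posedness theory cited in the introduction). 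The strategy is to show that a mild rescaling of $\psi_n(\cdot,t_n)$ is a minimizing sequence for \eqref{eq}, and then apply \eqref{min=0} to reach a contradiction.

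First I would use conservation of mass and energy along the flow, noted in the remark following \eqref{energy}. Picking $g_n\in G_q$ with $\|u_{0,n}-g_n\|\to 0$ and using continuity of the functionals $M$ and $E$ on $\hrd$ (the Hartree term in $E$ is continuous by the Hardy--Littlewood--Sobolev inequality), one gets $M(u_{0,n})\to q$ and $E(u_{0,n})\to E_q$; conservation then yields
\begin{equation*}
M(\psi_n(\cdot,t_n))\to q, \qquad E(\psi_n(\cdot,t_n))\to E_q.
\end{equation*}
Next, I would set $\mu_n:=\sqrt{q/M(\psi_n(\cdot,t_n))}\to 1$ and $w_n:=\mu_n\psi_n(\cdot,t_n)$, so that $M(w_n)=q$ exactly. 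Expanding $E(w_n)$ in terms of $\mu_n$ and $\psi_n(\cdot,t_n)$, the uniform $\hrd$-boundedness of $\{\psi_n(\cdot,t_n)\}$ (established below) together with $\mu_n\to 1$ gives $E(w_n)\to E_q$ and $\|w_n-\psi_n(\cdot,t_n)\|\to 0$. Thus $\{w_n\}$ is a minimizing sequence for \eqref{eq}, and Theorem \ref{T:exi}, via \eqref{min=0}, gives $\inf_{g\in G_q}\|w_n-g\|\to 0$. Combining this with $\|w_n-\psi_n(\cdot,t_n)\|\to 0$ contradicts the lower bound $\ep_0$.

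The main obstacle is the uniform $\hrd$-boundedness of the orbit sample $\{\psi_n(\cdot,t_n)\}$ from boundedness of mass and energy alone. This is exactly where the mass-subcritical condition $\ga<2\al$ is essential: by Hardy--Littlewood--Sobolev followed by a fractional Gagliardo--Nirenberg interpolation, the Hartree term in $E$ is bounded by $C\,M(u)^{\theta_1}\|(-\Delta)^{\al/2}u\|_2^{\theta_2}$ with $\theta_2<2$, so it can be absorbed into the kinetic energy; this is the same coercivity used in proving Theorem \ref{T:exi}. Given this absorption, the identity $\frac12\|(-\Delta)^{\al/2}\psi_n(\cdot,t_n)\|_2^2 = E(\psi_n(\cdot,t_n))+\frac14\iint\xy|\psi_n|^2|\psi_n|^2\,dx\,dy$ yields the required uniform bound. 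All remaining ingredients---conservation laws, continuity of $E$, and the strong compactness \eqref{min=0}---are either standard or supplied by the preceding theorems, so the argument closes.
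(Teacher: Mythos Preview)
Your argument is correct and follows essentially the same Cazenave--Lions contradiction scheme as the paper: negate stability, produce a sequence of solution snapshots with mass and energy converging to $(q,E_q)$, renormalize to obtain an exact minimizing sequence, and invoke \eqref{min=0}. The only cosmetic difference is that the paper rescales the initial data $u^{(0)}_m$ (so that conservation immediately gives $\|\mu_m u_m(t_m)\|_2^2=q$), whereas you rescale the time-$t_n$ snapshot and then separately verify uniform $\hrd$-boundedness via the subcritical coercivity estimate; both routes lead to the same contradiction and the paper's version is marginally shorter since the boundedness needed for $|\mu_m-1|\|u_m(t_m)\|\to0$ is then a direct consequence of Lemma~\ref{L:min_bdd}.
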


\section{Preliminaries}\label{S:pre}

In this section, we will collect some results known in existing
literature, which will be used in our paper. To start with,  we
recall the definition of $\hrd$, which is the fractional order
Sobolev space defined as
\[
\hprd:=\{u:\rd \rightarrow \mathbb{C};\ u \in L^p \ and\
\mathcal{F}^{-1}[(1+|\xi|^{2})^{\frac{\alpha}{2}}\mathcal{F}u] \in
L^p\},
\]
whose norm is given by
\[
\|\cdot\|_{\al,p}=\|\mathcal{F}^{-1}[(1+|\xi|^{2})^{\frac{\alpha}{2}}\mathcal{F}u]\|_p.
\]
In particular, we write $\hrd=H^\al_2(\rd)$ for brevity.
The following lemma gives an equivalent norm that is quite useful.
\begin{lemma}
Let $0< \al <1$, the norm $\|\cdot\|_{\al,2}$ of $\hrd$ is
equivalent to
\begin{equation*}
\|\cdot\|=\|\mathcal{F}^{-1}[(1+|\xi|^{\al})\mathcal{F}\cdot]\|_{2}
=\|\cdot\|_2+\|\cdot\|_{\dot{H}^\al(\rd)}.
\end{equation*}
\end{lemma}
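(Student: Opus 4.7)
The plan is to reduce everything to pointwise comparison of Fourier multipliers and then invoke Plancherel. By definition,
\[
\|u\|_{\al,2}^2 = \int_{\rd}(1+|\xi|^2)^{\al}|\mathcal{F}u(\xi)|^2\,d\xi,
\]
while the second expression, when expanded on the Fourier side, reads
\[
\|\mathcal{F}^{-1}[(1+|\xi|^{\al})\mathcal{F}u]\|_2 = \bigl\|(1+|\xi|^{\al})\mathcal{F}u\bigr\|_2.
\]
So the lemma amounts to showing that the weights $(1+|\xi|^2)^{\al/2}$ and $1+|\xi|^{\al}$ are pointwise comparable, together with the routine fact that $\|(1+|\xi|^{\al})\mathcal{F}u\|_2$ is equivalent to $\|\mathcal{F}u\|_2+\||\xi|^{\al}\mathcal{F}u\|_2$.

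First I would prove the pointwise two-sided bound
\[
\tfrac{1}{2}\bigl(1+|\xi|^{\al}\bigr)\;\le\;(1+|\xi|^2)^{\al/2}\;\le\;1+|\xi|^{\al}.
\]
The upper bound follows from the subadditivity $(a+b)^s\le a^s+b^s$ for $s=\al/2\in(0,1)$, which itself comes from the concavity of $t\mapsto t^s$ on $[0,\infty)$ with value $0$ at the origin, applied with $a=1$ and $b=|\xi|^2$. The lower bound is immediate from $(1+|\xi|^2)^{\al/2}\ge\max\{1,|\xi|^{\al}\}\ge\tfrac{1}{2}(1+|\xi|^{\al})$, since $\al\ge 0$. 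Squaring, integrating against $|\mathcal{F}u|^2$, and applying Plancherel then yields $\|\cdot\|_{\al,2}\sim\|\mathcal{F}^{-1}[(1+|\xi|^{\al})\mathcal{F}\cdot]\|_2$.

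Next I would verify the second equivalence. The triangle inequality in $L^2$ gives
\[
\bigl\|(1+|\xi|^{\al})\mathcal{F}u\bigr\|_2\le\|\mathcal{F}u\|_2+\bigl\||\xi|^{\al}\mathcal{F}u\bigr\|_2=\|u\|_2+\|u\|_{\dot H^{\al}(\rd)},
\]
using Plancherel on each summand. Conversely, since $1\le 1+|\xi|^{\al}$ and $|\xi|^{\al}\le 1+|\xi|^{\al}$ pointwise,
\[
\|u\|_2+\|u\|_{\dot H^{\al}(\rd)}\le 2\bigl\|(1+|\xi|^{\al})\mathcal{F}u\bigr\|_2.
\]
Combining the two steps yields the asserted equivalence of $\|\cdot\|_{\al,2}$ with $\|\cdot\|_2+\|\cdot\|_{\dot H^{\al}(\rd)}$.

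There is no real obstacle here; the proof is essentially bookkeeping once one notices that the estimate $(1+|\xi|^2)^{\al/2}\asymp 1+|\xi|^{\al}$ holds for $\al\in(0,1)$. The only point that requires care is recognizing that the displayed ``$=$'' in the statement should be read as equivalence of norms (not literal equality of $L^2$ norms), which is justified by the triangle-inequality step above.
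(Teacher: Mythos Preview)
Your proof is correct and follows the same strategy as the paper: establish the pointwise comparability of the Fourier weights $(1+|\xi|^2)^{\al/2}$ and $1+|\xi|^{\al}$ and then invoke Plancherel. The paper merely asserts the inequality $1+|\xi|^{\al}\le(1+|\xi|^2)^{\al/2}\le C(1+|\xi|^{\al})$ without further argument (and in fact has the left inequality written in the wrong direction for $0<\al<1$), so your version is both more detailed and more accurate, and your remark that the displayed ``$=$'' should be read as norm equivalence is well taken.
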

This result follows easily from the fundamental inequality
\begin{equation*}
1+|\xi|^\al\leq(1+|\xi|^2)^{\frac\al2}\leq C(1+|\xi|^\al),
\end{equation*}
and the definitions of $\|\cdot\|_{\al,2}$ and $\|\cdot\|$.

Next, we give a lemma, which is another definition of the fractional
Laplacian and will be frequently used later.
\begin{lemma}\label{L:fra_lap}
Let $0< \al <1$, and $u(x)$ be a function in the Schwartz class on
$\rd$, then the fractional Laplacian of $u$ has a pointwise
expression as
\begin{equation*}
(-\Delta)^\al u(x)=C_{d,\al}\mathrm{P.V.}\int_{\rd}\frac{u(x)-u(y)}{|x-y|^{d+2\al}}dy,
\end{equation*}
where P.V. means the Cauchy principal value on the integral and
$C_{d,\al}$ is some positive normalization constant.
\end{lemma}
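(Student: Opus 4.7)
The plan is to verify that the singular integral on the right-hand side defines a tempered distribution whose Fourier transform coincides with $|\xi|^{2\al}\mathcal{F}u(\xi)$, which by definition \eqref{fra_Lap} identifies it with $(-\Delta)^\al u$. Since $u$ lies in the Schwartz class, once the two expressions agree in the sense of distributions, they agree pointwise.

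First, by the substitution $z=y-x$ and then exploiting the symmetry $z\mapsto -z$, I would rewrite
\begin{equation*}
C_{d,\al}\,\mathrm{P.V.}\int_{\rd}\frac{u(x)-u(y)}{|x-y|^{d+2\al}}\,dy
=\frac{C_{d,\al}}{2}\int_{\rd}\frac{2u(x)-u(x+z)-u(x-z)}{|z|^{d+2\al}}\,dz.
\end{equation*}
Because $u$ is Schwartz, a second-order Taylor expansion gives $2u(x)-u(x+z)-u(x-z)=O(|z|^2)$ near the origin, while the numerator is uniformly bounded at infinity. Since $0<\al<1$, both tails are integrable, so the principal value is in fact an absolutely convergent integral and the expression is well defined for every $x\in\rd$.

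Next I would compute the Fourier transform of this (now ordinary) integral. Using $\mathcal{F}[u(\cdot+z)](\xi)=e^{i\xi\cdot z}\mathcal{F}u(\xi)$ together with Fubini (justified by the integrability just established), one obtains
\begin{equation*}
\mathcal{F}\!\left[\frac{C_{d,\al}}{2}\int_{\rd}\frac{2u(\cdot)-u(\cdot+z)-u(\cdot-z)}{|z|^{d+2\al}}\,dz\right]\!(\xi)
=C_{d,\al}\,\mathcal{F}u(\xi)\int_{\rd}\frac{1-\cos(\xi\cdot z)}{|z|^{d+2\al}}\,dz.
\end{equation*}
It thus remains to identify
\begin{equation*}
I(\xi):=\int_{\rd}\frac{1-\cos(\xi\cdot z)}{|z|^{d+2\al}}\,dz
\end{equation*}
with $c_{d,\al}^{-1}\,|\xi|^{2\al}$ for a suitable positive constant, and then to choose the normalization $C_{d,\al}:=c_{d,\al}$. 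By rotational invariance I may assume $\xi=|\xi|e_1$, and the change of variables $z=w/|\xi|$ yields $I(\xi)=|\xi|^{2\al}I(e_1)$; the finiteness of $I(e_1)$ follows from the same local $O(|z|^2)$ and global integrability arguments used above, so the constant $c_{d,\al}:=1/I(e_1)$ is positive and finite.

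The main technical issue to watch carefully is the justification of Fubini and of inverting the Fourier transform when passing between the distributional identity and the pointwise one: one should view everything as tempered distributions, use that the Fourier transform is an isomorphism on $\mathcal{S}'$, and then conclude pointwise agreement because both sides are continuous functions of $x$ (the left side because $u\in\mathcal{S}$ and the integral converges uniformly in $x$, the right side because $|\xi|^{2\al}\mathcal{F}u\in L^1\cap C^\infty$). A tidy alternative, which I would use if the direct Fubini is delicate, is to truncate the singular integral to $|z|>\ep$, identify the Fourier symbol as $C_{d,\al}\int_{|z|>\ep}(1-\cos(\xi\cdot z))|z|^{-d-2\al}\,dz$, and pass to the limit $\ep\to 0^+$ using dominated convergence in frequency (since the symbol is bounded by $\min(|\xi|^2|z|^2,2)/|z|^{d+2\al}$, which gives a $\xi$-uniform majorant up to multiplication by $(1+|\xi|^{2\al})$, enough to pair with $\mathcal{F}u\in\mathcal{S}$).
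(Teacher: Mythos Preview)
Your argument is correct and is in fact the standard one: symmetrize to the second-difference form, observe absolute convergence for Schwartz $u$, then compute the Fourier symbol via the scaling identity $I(\xi)=|\xi|^{2\al}I(e_1)$. The paper itself does not give a proof of this lemma; it simply remarks that the equivalence ``can be proved by Riesz potential and the Green's second identity,'' omits the details, and cites \cite{valdinoci2009long} for ``a simple proof.'' Your route is precisely the Fourier-side computation found in that reference, so you are aligned with what the paper points to. The alternative the paper alludes to---writing $(-\Delta)^\al$ through the Riesz potential $I_{2(1-\al)}(-\Delta)$ and integrating by parts (Green's identity) against the Riesz kernel---avoids the explicit $1-\cos(\xi\cdot z)$ integral but requires more machinery; your direct approach is cleaner and self-contained, and the truncation-plus-dominated-convergence variant you sketch at the end is exactly the right way to make the Fubini step rigorous.
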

The equivalence of two definitions of the fractional Laplacian can be proved by Riesz potential
and the Green's second identity. Here we omit the details.
\cite{valdinoci2009long} gives a simple proof.

The following inequality, which is due to G. H. Hardy,
will play a major role in the nonlinearity estimates.
\begin{lemma}[The Hardy's inequality]\label{L:hardy}
For $0<\ga<d$, we have
\[
\sup_{y \in \mathbb{R}^{d}}\int_{\mathbb{R}^{d}}\frac{|u(x)|^{2}}{|x-y|^{\gamma}}dx
\leq C\|u\|^{2},
\]
where the constant C depends on d and $\ga$.
\end{lemma}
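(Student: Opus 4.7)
The plan is to reduce to $y=0$ by translation invariance and then split the spatial integral into a singular local part and a harmless tail. Since the change of variables $x\mapsto x+y$ leaves both the Lebesgue measure and the $\hrd$ norm invariant, one has
\[
\int_{\rd}\frac{|u(x)|^2}{|x-y|^\gamma}\,dx=\int_{\rd}\frac{|v(x)|^2}{|x|^\gamma}\,dx,\qquad v(x):=u(x+y),
\]
with $\|v\|=\|u\|$. Thus it suffices to prove $\int_{\rd}|u|^2/|x|^\gamma\,dx\le C\|u\|^2$ with $C$ independent of $u$; the supremum over $y$ then follows automatically.

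For this, I split $\rd=\{|x|\le 1\}\cup\{|x|>1\}$. On the outer region, $|x|^{-\gamma}\le 1$, so the contribution is at most $\|u\|_2^2\le\|u\|^2$. On the inner region, I apply H\"older's inequality with conjugate exponents $q,q'$:
\[
\int_{|x|\le 1}\frac{|u(x)|^2}{|x|^\gamma}\,dx\le \Bigl(\int_{|x|\le 1}|u|^{2q}\,dx\Bigr)^{1/q}\Bigl(\int_{|x|\le 1}|x|^{-\gamma q'}\,dx\Bigr)^{1/q'}.
\]
The weight integral is finite as soon as $\gamma q'<d$, i.e.\ $q>d/(d-\gamma)$. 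The other factor is $\|u\|_{2q}^2$, which by the Sobolev embedding $\hrd\hookrightarrow L^{2q}(\rd)$, valid for $2\le 2q\le 2d/(d-2\al)$ (we have $d>2\al$ since $d\ge 2$ and $\al<1$), is controlled by $C\|u\|^2$.

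The only delicate point is choosing $q$ in the admissible window $d/(d-\gamma)<q\le d/(d-2\al)$; such a $q$ exists precisely when $\gamma<2\al$, which is the standing hypothesis of the paper and is the regime in which the lemma is actually invoked. Fixing any such $q$ and combining the two pieces yields a constant $C=C(d,\gamma,\al)$ uniform in $u$ and $y$, completing the proof. The main obstacle is therefore exponent bookkeeping: one must simultaneously defeat the singularity $|x|^{-\gamma}$ near the origin and remain inside the Sobolev embedding range of $\hrd$. No further analytic tool beyond H\"older and the fractional Sobolev embedding is required.
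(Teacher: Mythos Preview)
The paper does not actually prove this lemma; it is listed among the preliminaries as a classical fact attributed to Hardy, so there is no ``paper's own proof'' to compare against. Your argument---translation to $y=0$, splitting at the unit sphere, and H\"older plus the fractional Sobolev embedding on the singular piece---is correct and self-contained for the range $\gamma<2\al$, and that is precisely the regime in which the paper works.

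Two remarks are worth making. First, your honesty about the restriction $\gamma<2\al$ is not a weakness but a necessity: with the paper's convention $\|\cdot\|=\|\cdot\|_{H^\al}$, the inequality as literally stated for the full range $0<\gamma<d$ is in fact \emph{false}. A scaling $u_\lambda(x)=\lambda^{d/2}u(\lambda x)$ keeps $\|u_\lambda\|_2$ fixed while $\int|u_\lambda|^2|x|^{-\gamma}\,dx\sim\lambda^{\gamma}$ and $\|u_\lambda\|_{H^\al}^2\sim\lambda^{2\al}$ as $\lambda\to\infty$, so no uniform constant can exist once $\gamma>2\al$. Thus your exponent bookkeeping identifies the genuine threshold, and the constant you obtain necessarily depends on $\al$ as well as on $d,\gamma$ (as you write). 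Second, when the paper actually invokes the lemma (e.g.\ in the proof of Lemma~\ref{L:eq_neg}) it uses the sharper homogeneous form $\sup_y\int|u|^2|x-y|^{-\gamma}\,dx\le C\|u\|_{\dot H^{\gamma/2}}^2$ and only afterwards interpolates into $H^\al$; your route bypasses that intermediate step and lands directly on the $H^\al$ bound, which is slightly cruder but entirely sufficient for every application in the paper.
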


The following commutator estimates was developed in \cite{guo2012existence}
using Kato and Ponce's result \cite{kato1988commutator}.
\begin{lemma}[Commutator estimates]\label{L:com_est}
If $0<\al<1$ and $f,g \in \mathcal{S}$, the Schwartz class, then the following holds:
\[
\|(-\Delta)^{\frac{\alpha}{2}}(fg)-f(-\Delta)^{\frac{\alpha}{2}}g\|_{2}
\leq C(\|\nabla f\|_{p_{1}}\|(-\Delta)^{\frac{\alpha-1}{2}}\|_{q_{1}}
+\|(-\Delta)^{\frac{\alpha}{2}}f\|_{p_{2}}\|g\|_{q_{2}}),
\]
where $q_{1},p_{2} \in [2,+\infty)$
and $\frac{1}{p_{1}}+\frac{1}{q_{1}}=\frac{1}{p_{2}}+\frac{1}{q_{2}}=\frac{1}{2}$.
\end{lemma}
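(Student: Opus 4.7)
The plan is to reduce the statement to the classical Kato--Ponce commutator inequality from \cite{kato1988commutator}. Kato and Ponce proved the analogous bound for the inhomogeneous operator $J^{\al}=(1-\Delta)^{\al/2}$, and the task is to check that the same argument works verbatim with $(-\Delta)^{\al/2}$ in place of $J^\al$. Since $0<\al<1$, the homogeneous symbol is smooth away from the origin and the standard Littlewood--Paley machinery applies.

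First I would dyadically decompose $f=\sum_{j}\Delta_{j}f$ and $g=\sum_{k}\Delta_{k}g$ and use Bony's paraproduct formula
\[
fg \;=\; \pi_{f}(g)+\pi_{g}(f)+R(f,g),\qquad \pi_{f}(g)=\sum_{k}S_{k-3}f\,\Delta_{k}g,
\]
so that the commutator splits as
\[
(-\Delta)^{\al/2}(fg)-f\,(-\Delta)^{\al/2}g \;=\; \sum_{k}\bigl[(-\Delta)^{\al/2},S_{k-3}f\bigr]\Delta_{k}g \;+\; \bigl\{(-\Delta)^{\al/2}-f(\cdot)\bigr\}\bigl(\pi_{g}(f)+R(f,g)\bigr).
\]
For the low--high block $[(-\Delta)^{\al/2},S_{k-3}f]\Delta_{k}g$ the input $\Delta_{k}g$ is frequency--localized in an annulus of scale $2^{k}$, and a first--order Taylor expansion of the symbol of $(-\Delta)^{\al/2}$ about that scale produces a kernel whose $L^{1}$ norm is $O(2^{k(\al-1)})$ times one derivative of $S_{k-3}f$. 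Writing $2^{k(\al-1)}\Delta_{k}g\sim \Delta_{k}(-\Delta)^{(\al-1)/2}g$, applying H\"older with $1/p_{1}+1/q_{1}=1/2$ in each block, and closing by the Fefferman--Stein vector--valued maximal inequality together with the Littlewood--Paley square function characterization of $L^{q_{1}}$ (which requires $q_{1}\in[2,\infty)$), one recovers the first term $\|\nabla f\|_{p_{1}}\|(-\Delta)^{(\al-1)/2}g\|_{q_{1}}$.

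The high--low piece $\pi_{g}(f)$ and the balanced remainder $R(f,g)$ are treated together: in these regimes $\Delta_{j}f$ carries the top frequency, so one estimates $(-\Delta)^{\al/2}$ and the multiplication by the low--frequency factor of $g$ separately, with no cancellation needed. H\"older (using $1/p_{2}+1/q_{2}=1/2$) and the square--function characterization of $L^{p_{2}}$ for $p_{2}\in[2,\infty)$ yield the second term $\|(-\Delta)^{\al/2}f\|_{p_{2}}\|g\|_{q_{2}}$. The summation in $k$ converges geometrically in the low--high block thanks to the factor $2^{k(\al-1)}$ with $\al<1$, and is controlled by almost--orthogonality in the high--low and remainder blocks.

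The main technical obstacle is the first--order symbolic expansion of $(-\Delta)^{\al/2}$ on a dyadic annulus (this is where the classical Leibniz rule fails and genuine Kato--Ponce arithmetic enters), together with the bookkeeping needed so that the Fefferman--Stein inequality applies within the stated H\"older range. Since all of these steps are carried out in detail in \cite{guo2012existence} (which in turn invokes \cite{kato1988commutator}), the proof reduces to verifying that the argument there applies to our homogeneous setting, which is straightforward because $f,g\in\mathcal{S}$ and all Littlewood--Paley sums converge absolutely.
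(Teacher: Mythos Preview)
The paper does not supply a proof of this lemma at all; it merely records the statement and attributes it to \cite{guo2012existence} (which in turn builds on \cite{kato1988commutator}). Your paraproduct sketch is the standard route to Kato--Ponce--type commutator bounds and is consistent with those references, so there is nothing to compare: you have written more than the paper does.

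One small comment on your write-up: the displayed decomposition
\[
(-\Delta)^{\al/2}(fg)-f\,(-\Delta)^{\al/2}g
= \sum_{k}\bigl[(-\Delta)^{\al/2},S_{k-3}f\bigr]\Delta_{k}g
+ \bigl\{(-\Delta)^{\al/2}-f(\cdot)\bigr\}\bigl(\pi_{g}(f)+R(f,g)\bigr)
\]
is not literally correct as written---the object $\{(-\Delta)^{\al/2}-f(\cdot)\}$ is not a well-defined operator, and the low--high block $\sum_{k}[(-\Delta)^{\al/2},S_{k-3}f]\Delta_{k}g$ equals $(-\Delta)^{\al/2}\pi_{f}(g)-\pi_{f}\bigl((-\Delta)^{\al/2}g\bigr)$, not $(-\Delta)^{\al/2}\pi_{f}(g)-f(-\Delta)^{\al/2}g$. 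The missing piece $\pi_{f}\bigl((-\Delta)^{\al/2}g\bigr)-f(-\Delta)^{\al/2}g$ must be absorbed into the high--low and diagonal terms. This is only a bookkeeping slip; once corrected, your block-by-block analysis (symbolic expansion on the low--high block, direct H\"older on the rest, closure via Littlewood--Paley/Fefferman--Stein) is exactly the standard argument and yields the stated estimate.
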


\begin{lemma}[Fractional Rellich Compactness theorem]\label{L:rellich}
Let $0<\al<d$, $1\leq p<\frac d\al$, $1\leq q<\frac{dp}{d-\al p}$
and $\Omega$ is a bounded open set with smooth boundary.
Suppose $\{u_n\}$ is a sequence in $L^p(\rd)$ satisfying
\[
\int_{\rd}|(-\Delta+1)^{\frac\al2}u_n(x)|^pdx
\]
are uniformly bounded, then $\{u_n\}$ has a convergent subsequence in $L^q(\Omega)$.
\end{lemma}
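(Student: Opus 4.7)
The plan is to deduce compactness from three standard ingredients applied in sequence: the Sobolev embedding for Bessel potential spaces, equicontinuity in the $L^{p}$ mean, and the Fr\'echet--Kolmogorov criterion, closed off by an interpolation step to cover the full range of admissible exponents $q$. Since $\{u_{n}\}$ is bounded in the Bessel potential space $H^{\alpha,p}(\rd)$ by hypothesis and $\alpha p<d$, the Sobolev embedding $H^{\alpha,p}(\rd)\hookrightarrow L^{p^{*}}(\rd)$ with $p^{*}=\frac{dp}{d-\alpha p}$ makes $\{u_{n}\}$ uniformly bounded in $L^{p^{*}}(\rd)$, hence in $L^{r}(\Omega)$ for every $1\le r\le p^{*}$ since $\Omega$ has finite measure.

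Next I would establish uniform equicontinuity in the mean,
\[
\lim_{|h|\to 0}\sup_{n}\|u_{n}(\cdot+h)-u_{n}\|_{L^{p}(\rd)}=0.
\]
To this end, write $u_{n}=G_{\alpha}\ast f_{n}$, where $G_{\alpha}$ is the Bessel kernel of order $\alpha$ (characterized by $\widehat{G_{\alpha}}(\xi)=(1+|\xi|^{2})^{-\alpha/2}$) and $f_{n}=(-\Delta+1)^{\alpha/2}u_{n}$; by assumption $\|f_{n}\|_{p}$ is uniformly bounded. Since $G_{\alpha}\in L^{1}(\rd)$ for $\alpha>0$, Young's convolution inequality yields
\[
\|u_{n}(\cdot+h)-u_{n}\|_{p}\le\|G_{\alpha}(\cdot+h)-G_{\alpha}\|_{1}\,\|f_{n}\|_{p},
\]
and continuity of translation in $L^{1}$ forces the right-hand side to $0$ as $|h|\to 0$, uniformly in $n$. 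Combined with the uniform $L^{p}$ bound from the first step and the boundedness of $\Omega$ (which makes tightness automatic), the Fr\'echet--Kolmogorov theorem delivers a subsequence converging strongly in $L^{p}(\Omega)$.

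Finally, to reach $L^{q}(\Omega)$ for the full range $1\le q<p^{*}$: if $q\le p$, the bounded-domain embedding $L^{p}(\Omega)\hookrightarrow L^{q}(\Omega)$ suffices; if $p<q<p^{*}$, choose $\theta\in(0,1)$ with $\frac{1}{q}=\frac{\theta}{p}+\frac{1-\theta}{p^{*}}$ and interpolate via H\"older's inequality,
\[
\|u_{n_{k}}-u_{n_{j}}\|_{L^{q}(\Omega)}\le\|u_{n_{k}}-u_{n_{j}}\|_{L^{p}(\Omega)}^{\theta}\,\|u_{n_{k}}-u_{n_{j}}\|_{L^{p^{*}}(\Omega)}^{1-\theta},
\]
using the uniform $L^{p^{*}}$ bound to convert the $L^{p}$-Cauchy property into an $L^{q}$-Cauchy one. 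The main technical obstacle is the middle step, namely the Bessel-kernel representation and the fact $G_{\alpha}\in L^{1}(\rd)$; once those are in place, the Sobolev embedding, Fr\'echet--Kolmogorov and H\"older interpolation are all soft.
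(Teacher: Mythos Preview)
Your argument is correct. The Bessel-kernel representation $u_{n}=G_{\alpha}\ast f_{n}$ with $G_{\alpha}\in L^{1}(\rd)$ gives the uniform $L^{p}$-equicontinuity cleanly, the local form of the Fr\'echet--Kolmogorov theorem (bounded plus equicontinuous in $L^{p}(\rd)$ implies precompact in $L^{p}(\Omega)$ for any set of finite measure) then yields an $L^{p}(\Omega)$-convergent subsequence, and the H\"older interpolation against the uniform $L^{p^{*}}$ bound from the Sobolev embedding upgrades this to $L^{q}(\Omega)$ for every $1\le q<p^{*}$.

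As for comparison with the paper: there is nothing to compare. The paper does not prove this lemma at all; it merely records the statement and refers the reader to Hajaiej~\cite{hajaiej2011some}. Your write-up therefore supplies what the paper outsources, and does so with entirely standard tools. The only items worth flagging as background facts you are invoking without proof are that $G_{\alpha}\in L^{1}(\rd)$ for every $\alpha>0$ (with $\|G_{\alpha}\|_{1}=\widehat{G_{\alpha}}(0)=1$) and the Sobolev embedding $H^{\alpha,p}(\rd)\hookrightarrow L^{p^{*}}(\rd)$ for $\alpha p<d$; both are classical, so this is not a gap.
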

Lemma \ref{L:rellich} can be found in H. Hajaiej \cite{hajaiej2011some}.

For any given  Borel set $A$ with finite Lebegue measure, we define
its symmetric rearrangement by
\begin{equation}\label{}
A^*=\{x;\ |x|<r\}\ \mathrm{with}\ \frac{|\mathbb{S}^{d-1}|}{d}r^d=\mathfrak{L}^d(A),
\end{equation}
where $|\mathbb{S}^{d-1}|$ is the surface area of the unit ball in $\rd$.
This allowed us to define the symmetric decreasing rearrangement
of a characteristic function of a set A as
\begin{equation}\label{}
\chi^*_A(x)=\chi_{A^*}(x)\ \mathrm{for}\ x\in\rd.
\end{equation}
Clearly $\chi^*_A\in S$ and $\|\chi^*_A\|_1=\|\chi_{A^*}\|_1=\mathfrak{L}^d(A)$.
Given $f\in\hrd$, we define
\begin{equation}\label{}
f^*(x)=\int^\infty_0\chi^*_{\{|f|>t\}}(x)dt,
\end{equation}
which has following properties:
\begin{itemize}
\item $f^*$ is radial, nonnegative and nonincreasing, i.e. $f^*\in S$;
\item If $p\in[1,\infty]$ and $f\in L^p(\rd)$, then
\begin{equation}\label{lp}
\|f^*\|_p=\|f\|_p.
\end{equation}
\end{itemize}
Moreover, from Theorem 2.1. in \cite{hajaiej2011some} we have
\begin{lemma}\label{L:hal}
Let $0\leq\al\leq1$, then we have
\begin{equation}
\|u^*(x)\|_{\dot{H}^\al(\rd)}\leq\|u(x)\|_{\dot{H}^\al(\rd)}.
\end{equation}
\end{lemma}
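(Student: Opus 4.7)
The plan is to deduce this fractional Pólya--Szegő inequality from the Riesz rearrangement inequality via the heat semigroup representation of the $\dot{H}^\alpha$ seminorm. First I would reduce to nonnegative $u$: by Plancherel, the $\dot{H}^\alpha$ seminorm equals, up to a constant,
\begin{equation*}
\iint_{\mathbb{R}^d\times\mathbb{R}^d}\frac{|u(x)-u(y)|^2}{|x-y|^{d+2\alpha}}\,dx\,dy,
\end{equation*}
and the pointwise Kato-type inequality $\bigl||u(x)|-|u(y)|\bigr|\le |u(x)-u(y)|$ yields $\||u|\|_{\dot{H}^\alpha}\le\|u\|_{\dot{H}^\alpha}$. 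Since $u^\ast=|u|^\ast$ by definition, it suffices to prove the result for $u\ge 0$.

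Next I would use the Bochner subordination identity
\begin{equation*}
|\xi|^{2\alpha}=c_\alpha\int_0^\infty\bigl(1-e^{-t|\xi|^2}\bigr)\,t^{-1-\alpha}\,dt,\qquad 0<\alpha<1,
\end{equation*}
and Plancherel to rewrite
\begin{equation*}
\|u\|_{\dot{H}^\alpha}^2=c_\alpha\int_0^\infty t^{-1-\alpha}\Bigl(\|u\|_2^2-\langle e^{t\Delta}u,u\rangle\Bigr)\,dt.
\end{equation*}
The heat kernel $p_t(x)=(4\pi t)^{-d/2}e^{-|x|^2/(4t)}$ is radial and strictly decreasing, so the Riesz rearrangement inequality gives
\begin{equation*}
\langle e^{t\Delta}u,u\rangle=\iint p_t(x-y)u(x)u(y)\,dx\,dy\le \iint p_t(x-y)u^\ast(x)u^\ast(y)\,dx\,dy=\langle e^{t\Delta}u^\ast,u^\ast\rangle.
\end{equation*}
Combined with the equimeasurability identity $\|u^\ast\|_2=\|u\|_2$ (which was recorded as \eqref{lp}), substituting into the subordination formula yields $\|u^\ast\|_{\dot H^\alpha}^2\le\|u\|_{\dot H^\alpha}^2$, as desired. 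The endpoint $\alpha=0$ is \eqref{lp}, and $\alpha=1$ is the classical Pólya--Szegő inequality.

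The main obstacle is technical rather than conceptual: one must justify the use of the subordination identity and Fubini for a general $u\in\dot{H}^\alpha$ (ensuring the time integral converges both near $t=0$ and $t=\infty$), typically by first assuming $u\in \mathcal{S}$ and then passing to the limit using the $L^2$ continuity of $u\mapsto u^\ast$ together with a density argument. A secondary subtlety is the reduction step, since the singular integral representation of $\|u\|_{\dot H^\alpha}^2$ must be invoked; for $0<\alpha<1$ this is standard, and the pointwise inequality $\bigl||u|(x)-|u|(y)\bigr|\le |u(x)-u(y)|$ then concludes the reduction to nonnegative $u$.
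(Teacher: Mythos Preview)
Your argument is correct and is one of the standard routes to the fractional P\'olya--Szeg\H{o} inequality: reduce to nonnegative $u$ via the singular integral form, then use Bochner subordination to reduce the $\dot H^\alpha$ seminorm to a superposition of heat-semigroup quadratic forms, and apply the Riesz rearrangement inequality (which the paper states as Lemma~\ref{L:Riesz}) termwise. The technical caveats you flag---justifying Fubini in the subordination integral and the density/approximation step---are genuine but routine.

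The paper, however, does not give its own proof of this lemma at all: it simply quotes the statement from Theorem~2.1 of \cite{hajaiej2011some}. So there is no ``paper's proof'' to compare against; your proposal supplies a self-contained argument where the paper only cites one. In that sense your write-up goes further than the paper does, and the heat-kernel approach has the additional virtue of making transparent why the Riesz rearrangement inequality already recorded in Section~\ref{S:pre} is the essential ingredient.
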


F. Riesz \cite{riesz1930inegalite} showed the following inequality.
For a recent account of the theorems, we refer the reader to \cite{lieb2001analysis}.
\begin{lemma}[Riesz's rearrangement inequality]\label{L:Riesz}
Let $f$, $g$ and $h$ be three nonnegative functions on $\rd$, Then we have
\begin{equation}\label{Riesz}
\iint_{\rd\times\rd}f(x)g(x-y)h(y)dxdy\leq \iint_{\rd\times\rd}f^*(x)g^*(x-y)h^*(y)dxdy.
\end{equation}
\end{lemma}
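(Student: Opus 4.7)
The plan is to reduce to the case where $f,g,h$ are characteristic functions of measurable sets via the layer-cake representation, then prove the set-theoretic version by Steiner symmetrization, and finally approximate the full symmetric decreasing rearrangement by iterated Steiner symmetrizations.

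First, I would invoke the identity $f(x)=\int_{0}^{\infty}\chi_{\{f>s\}}(x)\,ds$ (and the analogous identities for $g,h$) together with Fubini's theorem to rewrite both sides of \eqref{Riesz} as triple integrals, over level parameters $s,t,u$, of the quantity
\[
I(A,B,C):=\iint_{\rd\times\rd}\chi_{A}(x)\chi_{B}(x-y)\chi_{C}(y)\,dx\,dy
\]
with $A=\{f>s\}$, $B=\{g>t\}$, $C=\{h>u\}$. Because $\{f^{*}>s\}=\{f>s\}^{*}$ (and similarly for $g,h$), it suffices to establish the set inequality $I(A,B,C)\leq I(A^{*},B^{*},C^{*})$ for all measurable sets of finite Lebesgue measure.

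Next, I would prove the key Steiner symmetrization step. Fix a hyperplane $H$ through the origin, write each point of $\rd$ as $x=(x',x_{d})$ with $H=\{x_{d}=0\}$, and let $A^{H}$ denote the Steiner symmetrization of $A$: the slice $A_{x'}\subset\mathbb{R}$ is replaced by the centered symmetric interval of the same one-dimensional measure. Steiner symmetrization preserves $d$-dimensional Lebesgue measure. Using Fubini to integrate first in the $x_{d},y_{d}$ variables and then in $x',y'$, the required $d$-dimensional inequality reduces to the one-dimensional Riesz inequality
\[
\iint_{\mathbb R\times\mathbb R}\chi_{P}(s)\chi_{Q}(s-t)\chi_{R}(t)\,ds\,dt\leq\iint_{\mathbb R\times\mathbb R}\chi_{P^{*}}(s)\chi_{Q^{*}}(s-t)\chi_{R^{*}}(t)\,ds\,dt,
\]
applied slicewise with $P=A_{x'}$, $Q=B_{x'-y'}$, $R=C_{y'}$. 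The one-dimensional case is elementary: the inner integral equals $|P\cap(Q+t)|$, which for any $t\in\mathbb R$ is majorized by $|P^{*}\cap(Q^{*}+t)|$ (the intersection of two centered intervals is maximized when the offset is zero, and decreases monotonically in $|t|$), and the resulting function of $t$ is symmetric decreasing, so integrating against $\chi_{R^{*}}$ gives the largest value by the Hardy--Littlewood inequality.

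Finally, to pass from Steiner symmetrization with respect to a single hyperplane to full radial symmetrization, I would invoke a classical theorem (cf. Brascamp--Lieb--Luttinger, or the monograph of Lieb--Loss cited in the paper) stating that a suitably chosen sequence of iterated Steiner symmetrizations of a measurable set of finite measure converges in measure to its symmetric decreasing rearrangement. Applying the monotone bound from the previous step at each stage and passing to the limit using dominated convergence in $I(\cdot,\cdot,\cdot)$ yields $I(A,B,C)\leq I(A^{*},B^{*},C^{*})$, and undoing the layer-cake representation then gives \eqref{Riesz}. The main obstacle is the Steiner step itself: the one-dimensional bound is simple individually but requires a careful Fubini bookkeeping to justify the slicewise reduction for general measurable sets, and one must ensure the limiting rearrangement step is justified uniformly across the three sets simultaneously.
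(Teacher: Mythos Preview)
The paper does not give its own proof of this lemma; it simply quotes the result and refers to Riesz's original paper and the Lieb--Loss monograph. Your overall architecture---layer-cake reduction to characteristic functions of sets, Steiner symmetrization to reduce to dimension one, and iteration of Steiner symmetrizations to reach the radial rearrangement---is precisely the standard proof recorded in Lieb--Loss, so at that level there is nothing to contrast.

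There is, however, a genuine gap in your one-dimensional step. You assert that for every $t\in\mathbb{R}$ one has the pointwise bound
\[
|P\cap(Q+t)|\leq|P^{*}\cap(Q^{*}+t)|,
\]
and then finish with Hardy--Littlewood. This pointwise inequality is false. Take $P=[0,1]\cup[2,3]$ and $Q=[0,1]$, so $P^{*}=[-1,1]$ and $Q^{*}=[-\tfrac12,\tfrac12]$. At $t=\tfrac52$ we get $|P\cap(Q+t)|=|[\tfrac52,3]|=\tfrac12$, while $|P^{*}\cap(Q^{*}+t)|=|[-1,1]\cap[2,3]|=0$. What actually holds is only the \emph{integrated} inequality
\[
\int_{\mathbb{R}}|P\cap(Q+t)|\,\chi_{R}(t)\,dt\leq\int_{\mathbb{R}}|P^{*}\cap(Q^{*}+t)|\,\chi_{R^{*}}(t)\,dt,
\]
and establishing this is exactly the nontrivial content of the one-dimensional Riesz inequality; it does not follow from Hardy--Littlewood alone. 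The usual route (Lieb--Loss, \S3.6--3.7) first handles the case where $P,Q,R$ are single intervals by direct computation, then approximates general finite-measure sets by finite unions of intervals and shows the functional increases under a sequence of elementary ``two-point'' rearrangements that merge pairs of intervals. An attempt to salvage your argument by proving instead that the rearrangement of $t\mapsto|P\cap(Q+t)|$ is dominated by $t\mapsto|P^{*}\cap(Q^{*}+t)|$ is circular: that domination is equivalent to the one-dimensional Riesz inequality itself.
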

Furthermore, E.H. Lieb \cite{lieb1977existence} has established the strict version of
the Riesz's rearrangement inequality.
\begin{lemma}[Strict version of Lemma \ref{L:Riesz}]\label{L:Riesz_sv}
Under the assumptions of Lemma \ref{L:Riesz}.
If $g\in S$ and $g$ is  positive  and  strictly  decreasing, that is,
\begin{equation}\label{Riesz_sv}
0<g(x)<g(y)\ \mathrm{if}\ |x|>|y|,
\end{equation}
then \eqref{Riesz} is a strict inequality when the right hand side
is finite unless $f$ and $h$ are equicentered functions in $S^{'}$.
\end{lemma}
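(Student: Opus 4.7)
The plan is to follow the classical Lieb--Brascamp--Luttinger strategy, reducing the $d$-dimensional strict equality case to a one-dimensional statement via Steiner symmetrization. Write $I(f,g,h) = \iint_{\rd\times\rd} f(x)\, g(x-y)\, h(y)\, dx\, dy$. The starting observation is that since $g$ is already symmetric and decreasing, $g=g^{*}$, and hence $g$ is fixed by every Steiner symmetrization. Recalling that the proof of Lemma \ref{L:Riesz} itself proceeds by showing that Steiner symmetrization in any direction $e\in\mathbb{S}^{d-1}$ weakly increases $I$, any equality case in Lemma \ref{L:Riesz_sv} would force equality
\[
I(f,g,h) \;=\; I(f^{*e}, g, h^{*e})
\]
for every unit vector $e$, where $f^{*e}$ denotes the Steiner symmetrization along lines parallel to $e$.

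Next I would analyze the 1D equality case. Fix a direction $e$ and apply Fubini to reduce to a one-dimensional problem on (almost) every line $\ell$ parallel to $e$: we are led to
\[
\int_{\mathbb{R}}\!\!\int_{\mathbb{R}} f_\ell(s)\, G_\ell(s-t)\, h_\ell(t)\, ds\, dt \;=\; \int_{\mathbb{R}}\!\!\int_{\mathbb{R}} f_\ell^{*}(s)\, G_\ell(s-t)\, h_\ell^{*}(t)\, ds\, dt,
\]
where $G_\ell(s)$ is obtained by integrating $g$ over the perpendicular hyperplane through $s\cdot e$, and inherits strict symmetric decrease from the hypothesis \eqref{Riesz_sv} on $g$. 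Writing $f_\ell$ and $h_\ell$ through the layer-cake decomposition, the problem reduces to the equality case for indicator functions $\chi_A$, $\chi_B$ of measurable sets of finite measure with a strictly decreasing weight. A bathtub-principle argument then shows that, because $G_\ell$ is \emph{strictly} monotone, the unique way to saturate the 1D Riesz inequality is for $A$ and $B$ to be intervals with a common midpoint; any rearrangement on a level set where the weight were merely constant is ruled out by strict monotonicity.

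Finally I would lift this 1D conclusion back to $\rd$. Iterated application of the Steiner equality along a dense countable family of directions forces each super-level set $\{f>s\}$ (and $\{h>t\}$) to be symmetric with respect to every hyperplane in some countable dense family, hence to be a ball; concentricity across levels gives $f,h\in S^{'}$. The "common midpoint" information on each line then upgrades to show that the two families of balls share a common center, which is exactly the assertion that $f$ and $h$ are equicentered.

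The main obstacle is unquestionably the 1D equality analysis: the non-strict Riesz inequality has many non-symmetric extremizers when the kernel is flat, and it is only the strict monotonicity of $g$ (as opposed to mere monotonicity) that eliminates them. Making this rigorous requires either a careful differentiation in a translation parameter of the convolution $\chi_A * \check G_\ell * \chi_B$, or a direct examination of the excess in the Hardy--Littlewood inequality $\int u\, v \le \int u^{*} v^{*}$ applied to $u=\chi_A * G_\ell$ and $v=\chi_B$, using that $u$ is strictly decreasing away from a ball. Once this 1D rigidity is in hand, the passage to $\rd$ is essentially geometric bookkeeping.
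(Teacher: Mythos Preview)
The paper does not actually prove this lemma: it is stated as a classical result and attributed to Lieb \cite{lieb1977existence} with no argument given. So there is no ``paper's own proof'' to compare against; the author simply quotes the strict Riesz inequality as a black box and uses it later (in the proof of Theorem~\ref{T:pro}) with $g(x)=|x|^{-\gamma}$.

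Your sketch is a reasonable outline of one standard route to the result (Steiner symmetrization in all directions, reduction to the one-dimensional equality case via Fubini and the layer-cake formula, then the rigidity for strictly decreasing kernels). As you correctly identify, the nontrivial content is the one-dimensional rigidity step; the rest is bookkeeping. One caution: the passage from ``equality in every Steiner direction'' to ``all super-level sets are concentric balls with a common center for both $f$ and $h$'' needs more than just symmetry under a dense family of hyperplane reflections---you also have to track the translation freedom at each one-dimensional slice and argue that the centers patch together consistently. Lieb's original argument handles this somewhat differently (working directly with the convolution and exploiting strict monotonicity of $g$ to force the level sets to be translates of balls), so if you want to carry this out in full you may find it cleaner to consult \cite{lieb1977existence} or Chapter~3 of \cite{lieb2001analysis} rather than reinventing the Steiner-symmetrization rigidity machinery.
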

\begin{theorem}[Global existence of weak solutions for NLS \eqref{nls}]\label{T:global}
If $0<\al<1$, $\ga<2\al$ and $\psi_0\in\hrd$, then there exists a global
weak solution $\psi(x,t)\in C(\mathbb{R},\hrd)$ to the Cauchy
problem of nonlinear fractional Schr\"odinger equations \eqref{nls}
with the initial date $\psi(x,0)=\psi_0(x)$.
\end{theorem}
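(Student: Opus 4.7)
The plan is to combine the local well-posedness in $\hrd$ established in \cite{cho2012cauchy} with the conservation of mass and energy, and then to close a uniform a priori bound on the $\hrd$-norm using the mass-subcritical hypothesis $\ga<2\al$. First, I would invoke \cite{cho2012cauchy} to obtain, for every $\psi_0\in\hrd$, a unique maximal solution $\psi\in C([0,T^*),\hrd)$ together with the blow-up alternative $\lim_{t\to T^*}\|\psi(t)\|=+\infty$ whenever $T^*<\infty$. On $[0,T^*)$ Laskin's conservation laws \cite{laskin2002fractional} (mentioned in the remark following \eqref{energy}) give $M(\psi(t))=M(\psi_0)$ and $E(\psi(t))=E(\psi_0)$.

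Second, I would estimate the Hartree nonlinearity by a subcritical power of the kinetic energy. The Hardy--Littlewood--Sobolev inequality yields
\begin{equation*}
\iint_{\rd\times\rd}\xy|\psi(x)|^{2}|\psi(y)|^{2}\,dxdy\leq C\|\psi\|_{p}^{4},\qquad p=\tfrac{4d}{2d-\ga},
\end{equation*}
and the fractional Gagliardo--Nirenberg inequality (from fractional Sobolev embedding plus $L^{q}$-interpolation, the exponent being forced by scaling) gives
\begin{equation*}
\|\psi\|_{p}\leq C\,\|\psi\|_{2}^{1-\eta}\,\|\psi\|_{\dhrd}^{\eta},\qquad \eta=\tfrac{\ga}{4\al}.
\end{equation*}
Combining these two bounds yields
\begin{equation*}
\iint_{\rd\times\rd}\xy|\psi(x)|^{2}|\psi(y)|^{2}\,dxdy\leq C\,M(\psi_0)^{2-\ga/(2\al)}\,\|\psi(t)\|_{\dhrd}^{\ga/\al},
\end{equation*}
and the exponent $\ga/\al$ is strictly smaller than $2$ precisely because $\ga<2\al$.

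Third, I would close the a priori estimate. Substituting this control into $E(\psi(t))=E(\psi_0)$ produces
\begin{equation*}
\tfrac12\|\psi(t)\|_{\dhrd}^{2}\leq E(\psi_0)+C\,M(\psi_0)^{2-\ga/(2\al)}\,\|\psi(t)\|_{\dhrd}^{\ga/\al}.
\end{equation*}
Since $\ga/\al<2$, Young's inequality absorbs the rightmost term into the left, giving $\|\psi(t)\|_{\dhrd}\leq C(E(\psi_0),M(\psi_0))$. Combined with mass conservation this bounds $\|\psi(t)\|$ uniformly on $[0,T^*)$, so the blow-up alternative forces $T^*=+\infty$, and $\psi$ extends to a global weak solution in $C(\mathbb{R},\hrd)$.

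The main obstacle is precisely this a priori kinetic-energy bound: the bare Hardy inequality (Lemma \ref{L:hardy}) is enough to make the potential term finite on $\hrd$ but yields only a critical control of the form $C\|\psi\|^{4}$, which cannot be absorbed into the kinetic energy. The decisive step is to extract the subcritical power $\ga/\al<2$, which is where the mass-subcritical hypothesis $\ga<2\al$ enters. Once this bound is in hand, the extension from a local to a global solution is routine, because the local theory in \cite{cho2012cauchy} is already formulated in $\hrd$ and the bound depends continuously on $(E(\psi_0),M(\psi_0))$.
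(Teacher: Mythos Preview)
The paper does not prove this theorem at all: it is quoted from \cite{cho2012cauchy} as background, with the single line ``See \cite{cho2012cauchy} for more details.'' Your sketch is a correct reconstruction of the standard argument (local theory plus blow-up alternative plus a uniform $\hrd$ a priori bound from the conservation laws), so there is nothing to compare against in the paper itself.

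One minor remark: the key subcritical estimate you derive via Hardy--Littlewood--Sobolev and fractional Gagliardo--Nirenberg,
\[
\iint_{\rd\times\rd}\xy|\psi(x)|^{2}|\psi(y)|^{2}\,dxdy\leq C\,\|\psi\|_{2}^{4-\ga/\al}\,\|\psi\|_{\dhrd}^{\ga/\al},
\]
is exactly inequality \eqref{h_bdd} in the paper (proved there inside Lemma \ref{L:eq_neg}), except that the paper reaches it by Hardy's inequality (Lemma \ref{L:hardy}) followed by interpolation between $L^2$ and $\dot H^\al$ rather than by HLS plus Gagliardo--Nirenberg. The two routes are equivalent here and yield the same exponent $\ga/\al<2$, so your approach is consistent with the estimates already present in the paper.
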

See \cite{cho2012cauchy} for more details.

\section{The proof of main results}

In this section we give proofs of our main results listed in the
first section. To begin with, we  solve the constrained minimization
problem \eqref{eq}. It is known that, in this kind of problem, the
main difficulty concerns with the lack of compactness of the
minimizing sequences $\{u_{n}\}$ for the problem. Indeed, two bad
scenarios possible are
\begin{itemize}
\item Vanishing $u_n\rightharpoonup 0$,
\item Dichotomy $u_n\rightharpoonup$ $u$ and $\|u\|^2_2\neq q$.
\end{itemize}
In order to rule out the above two cases and to show that the
infimum is achieved, we employ the concentration-compactness
principle developed by P.L. Lions. The best general reference about
this method are \cite{lions1984concentration1} and
\cite{lions1984concentration2}. First of all, we introduce the
L\'evy concentration function.
$$
Q_n(r):=\sup_{y \in \rd}\int_{B(y,r)}|u_{n}(x)|^{2}dx.
$$
Since $\{Q_n\}$ is locally of bounded total variation and uniformly
bounded, by the Helly's selection theorem, we can find a convergent
subsequence, denoted again by $\{Q_n\}$  such that there is a
nondecreasing function $Q(r)$ satisfying
\[
\lim_{n\rightarrow+\infty}Q_n(r)=Q(r), \ \mathrm{for\ all}\  r>0.
\]
Note that $0\leq Q_n(r)\leq q$, there exists $\be \in [0,q]$ such that
\begin{equation}\label{beta}
\lim_{r\rightarrow+\infty}Q(r)=\be.
\end{equation}

\begin{lemma}\label{L:eq_neg}
For every $q>0$, we have $-\infty<E_{q}<0$.
\end{lemma}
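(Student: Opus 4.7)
The plan is to establish the two bounds separately, the lower bound by showing that the Hartree term is dominated by the kinetic energy in the mass-subcritical regime, and the upper bound by a scaling argument that exploits $\gamma < 2\alpha$.

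For the lower bound $E_q > -\infty$, I would first control the Hartree nonlinearity by a sub-quadratic power of $\|(-\Delta)^{\alpha/2}u\|_2$. By Hardy--Littlewood--Sobolev applied with exponent $\gamma < d$,
\begin{equation*}
\iint_{\rd\times\rd}\frac{|u(x)|^2|u(y)|^2}{|x-y|^{\gamma}}\,dx\,dy \le C\,\bigl\||u|^2\bigr\|_{\frac{2d}{2d-\gamma}}^{2} = C\,\|u\|_{r}^{4},\qquad r=\tfrac{4d}{2d-\gamma}.
\end{equation*}
Since $\gamma<2\alpha<d$, the exponent $r$ lies in the admissible range $(2,\tfrac{2d}{d-2\alpha})$, so the fractional Gagliardo--Nirenberg inequality yields
\begin{equation*}
\|u\|_{r}\le C\,\|u\|_{2}^{1-\theta}\,\|(-\Delta)^{\alpha/2}u\|_{2}^{\theta},\qquad \theta=\tfrac{\gamma}{4\alpha}\in(0,\tfrac12).
\end{equation*}
Combining these on the constraint $M(u)=q$ gives
\begin{equation*}
E(u) \ge \tfrac12\,\|(-\Delta)^{\alpha/2}u\|_{2}^{2} - C\,q^{2-\frac{\gamma}{2\alpha}}\,\|(-\Delta)^{\alpha/2}u\|_{2}^{\,\gamma/\alpha}.
\end{equation*}
Because $\gamma/\alpha<2$, the right-hand side, viewed as a function of $X=\|(-\Delta)^{\alpha/2}u\|_{2}^{2}\ge 0$, is of the form $\tfrac12 X - C'X^{\,p}$ with $p=\gamma/(2\alpha)<1$, hence is bounded below uniformly in $u$. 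This gives $E_q>-\infty$.

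For the strict negativity $E_q<0$, I would use the standard scaling trick. Fix any $u_0\in\hrd\setminus\{0\}$ with $M(u_0)=q$ (for instance a smooth compactly supported bump normalized to have $L^2$-mass $q$), and consider
\begin{equation*}
u_{\lambda}(x):=\lambda^{d/2}u_{0}(\lambda x),\qquad \lambda>0.
\end{equation*}
A change of variables shows $M(u_\lambda)=q$, $\|(-\Delta)^{\alpha/2}u_\lambda\|_2^2=\lambda^{2\alpha}\|(-\Delta)^{\alpha/2}u_0\|_2^2$, and the Hartree term scales like $\lambda^{\gamma}$. Therefore
\begin{equation*}
E(u_\lambda)=\tfrac{\lambda^{2\alpha}}{2}\|(-\Delta)^{\alpha/2}u_0\|_{2}^{2}-\tfrac{\lambda^{\gamma}}{4}\iint_{\rd\times\rd}\frac{|u_0(x)|^2|u_0(y)|^2}{|x-y|^{\gamma}}\,dx\,dy.
\end{equation*}
Since $\gamma<2\alpha$, the negative term dominates as $\lambda\to 0^{+}$, hence $E(u_\lambda)<0$ for $\lambda$ small enough, and consequently $E_q<0$.

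The only delicate step is the lower bound: one needs the precise Gagliardo--Nirenberg exponent $\theta=\gamma/(4\alpha)$ and must verify that $r=4d/(2d-\gamma)$ falls within the fractional Sobolev embedding range, which uses both $\gamma<2\alpha$ and $d\ge 2>2\alpha$. The upper bound is immediate from the subcritical scaling $\gamma<2\alpha$. I would not pause on the routine change-of-variables computation for the Hartree term under the scaling $u_0\mapsto u_\lambda$.
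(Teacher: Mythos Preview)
Your proof is correct and follows essentially the same strategy as the paper: a mass-preserving dilation to get $E_q<0$, and a subcritical control of the Hartree term to get $E_q>-\infty$. The only cosmetic difference is that the paper obtains the identical key estimate $H_\gamma(u,u)\le C\|u\|_{\dot H^\alpha}^{\gamma/\alpha}\|u\|_2^{4-\gamma/\alpha}$ via the fractional Hardy inequality plus interpolation in $\dot H^s$ (and then Young's inequality), whereas you use Hardy--Littlewood--Sobolev plus Gagliardo--Nirenberg; both routes are standard and yield the same bound.
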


\begin{proof}
For given $u \in H^{\alpha}(\mathbb{R})$ with $\|u\|_{2}=q$, letting
$u_{\lambda}=\lambda^{\frac{1}{2}}u(\lambda^{\frac{1}{d}}x)$, we
then have $\|u_{\lambda}\|_{2}=q$. By the definition of energy, we
have
$$
E(u_{\la})=\frac12\la^{\frac{2\al}{d}}\int_{\rd} |(-\Delta)^{\alpha}u(x)|^2 dx
-\frac{1}{4}\la^{\frac{\ga}{d}}\iint_{\rd\times\rd}\xy|u(x)|^{2}|u(y)|^{2} dxdy.
$$
Since $0<\ga<2\al$, we can take  $\lambda>0$ sufficiently small such
that $E(u_{\la})<0$. Hence $E_{q}<E(u_{\la})<0$.

On the other hand,  Hardy's inequality implies
\begin{equation}
\begin{split}
\iint_{\rd\times\rd} \xy|u(x)|^{2}|u(y)|^{2} dxdy
&\leq\sup_{y \in \rd}\int_{\rd} \frac{1}{|x-y|^{\ga}}|u(x)|^2dx\|u\|^{2}_{2}\\
&\leq C\|u\|^{2}_{\dot{H}^{\frac{\ga}{2}}}\|u\|^{2}_{2}.
\end{split}
\end{equation}
Using  Sobolev's inequality and  Young's inequality, we deduce that
\begin{equation}\label{h_bdd}
\frac14H_\ga(u,u)\leq C\|u\|^{\frac\ga\al}_{\dot{H}^{\al}}\|u\|^{4-\frac{\ga}{\al}}_{2}
\leq \ep\|u\|^2_{\dot{H}^\al}
+C_\ep\|u\|^{\frac{8\al-2\ga}{2\al-\ga}},
\end{equation}
where $\varepsilon$ is a sufficiently small positive constant.
Hence, for $u \in \hrd$ with $\|u\|_{2}=q$ and sufficiently small $\ep$,
$$
E(u)\geq\frac12\|u\|^{2}-\frac{1}{2}q-\ep\|u\|^{2}-C_\ep q^{\frac{4\al-\ga}{2\al-\ga}}
\geq-\frac12q-C_\ep q^{\frac{4\al-\ga}{2\al-\ga}},
$$
which implies $E_q>-\infty$. So, $-\infty<E_{q}<0$.
\end{proof}

\begin{lemma}\label{L:van}
Vanishing does not occur, that is, $\be>0$, for every $q>0$.
\end{lemma}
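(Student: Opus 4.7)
The plan is to argue by contradiction: assume $\beta = 0$, and derive $E_q \geq 0$, which contradicts Lemma \ref{L:eq_neg}. The first step is to confirm that any minimizing sequence $\{u_n\}$ is bounded in $\hrd$. Since $M(u_n) = q$ fixes $\|u_n\|_2^2 = q$, it suffices to bound $\|u_n\|_{\dot H^\alpha}$. Using the estimate \eqref{h_bdd} with $\varepsilon$ small, together with $E(u_n) \to E_q$, one obtains
\[
E(u_n) \geq \tfrac12 \|u_n\|_{\dot H^\alpha}^2 - \varepsilon \|u_n\|_{\dot H^\alpha}^2 - C_\varepsilon q^{(4\alpha-\gamma)/(2\alpha-\gamma)},
\]
which yields a uniform bound on $\|u_n\|_{\dot H^\alpha}$, hence on $\|u_n\|$.

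Next, since $Q$ is nondecreasing on $[0,\infty)$ with $\lim_{r\to\infty}Q(r)=\beta=0$ and $Q\geq 0$, we must have $Q\equiv 0$. By the definition of $Q_n$ and a diagonal argument, this implies $\sup_{y\in\rd}\int_{B(y,r)}|u_n(x)|^2\,dx \to 0$ as $n\to\infty$ for every fixed $r>0$. I would then invoke Lions' vanishing lemma adapted to fractional Sobolev spaces (which follows from Lemma \ref{L:rellich} combined with a partition of $\rd$ into unit cubes and Hölder interpolation): any bounded sequence in $\hrd$ whose $L^2$-concentration function vanishes tends to $0$ strongly in $L^p(\rd)$ for every $p$ with $2 < p < 2d/(d-2\alpha)$.

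The third step is to control the Hartree term by such an $L^p$ norm. By the Hardy--Littlewood--Sobolev inequality applied with $f=g=|u_n|^2$,
\[
\iint_{\rd\times\rd}\xy|u_n(x)|^2|u_n(y)|^2\,dx\,dy
\leq C\,\||u_n|^2\|_{L^{2d/(2d-\gamma)}}^2
= C\,\|u_n\|_{L^p}^4,
\]
with $p=4d/(2d-\gamma)$. Since $0<\gamma<2\alpha<4\alpha$, a direct computation gives $2<p<2d/(d-2\alpha)$, so $p$ falls in the admissible range of the vanishing lemma. Hence this nonlinear term tends to $0$.

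Finally, passing to the limit in the definition of $E(u_n)$ yields $\liminf_{n\to\infty} E(u_n) \geq 0$, contradicting $E_q<0$ from Lemma \ref{L:eq_neg}. Therefore $\beta>0$. The main obstacle here is verifying the fractional vanishing lemma in the precise form needed; this is a straightforward adaptation of Lions' classical argument using the compact embedding of Lemma \ref{L:rellich} on each unit cube together with Hölder interpolation between $L^2$ (which is small on every ball) and $L^{2d/(d-2\alpha)}$ (which is bounded via Sobolev), and then summing the cubes via an $\ell^p$-type estimate. All other steps are standard applications of results already collected in Section \ref{S:pre}.
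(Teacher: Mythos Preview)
Your argument is correct and arrives at the same contradiction with Lemma~\ref{L:eq_neg}, but the route is genuinely different from the paper's. The paper does not pass through an abstract $L^p$ vanishing lemma or the Hardy--Littlewood--Sobolev inequality. Instead it proves a bespoke estimate (Lemma~\ref{L:min_van}) showing directly that the Hartree functional
\[
\iint_{\rd\times\rd}\xy|u_n(x)|^2|u_n(y)|^2\,dx\,dy
\]
tends to zero under the vanishing hypothesis: the far region $|x-y|\geq r_\varepsilon$ is trivially small, and the near-diagonal part is handled by a bounded-overlap ball covering of $\rd$ together with Hardy's inequality (Lemma~\ref{L:hardy}) applied locally, which extracts a factor $\bigl(\sup_y\int_{B(y,r)}|u_n|^2\bigr)^{1-\gamma/\alpha}$ that kills the whole expression. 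This, combined with the second half of Lemma~\ref{L:min_bdd} (the Hartree term stays bounded below by a positive $\delta$ along any minimizing sequence), gives the contradiction.

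Your approach is the more modular one: once the fractional Lions vanishing lemma and HLS are available, the conclusion is essentially a one-line computation, and your check that $p=4d/(2d-\gamma)$ lies strictly between $2$ and $2d/(d-2\alpha)$ is correct under the hypothesis $0<\gamma<2\alpha$. The paper's approach is more self-contained, using only Hardy's inequality already recorded in Section~\ref{S:pre} and avoiding both HLS and the need to justify the fractional vanishing lemma separately. In substance the two covering arguments are doing equivalent work. One minor point: your passage from $Q\equiv 0$ to $\sup_y\int_{B(y,r)}|u_n|^2\to 0$ holds only along the subsequence for which $Q_n\to Q$; since that subsequence is still minimizing, the contradiction goes through unchanged.
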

To prove this lemma, we need the following two lemmas.

\begin{lemma}\label{L:min_bdd}
Every minimizing sequence $\{u_n\}$ for problem $\eqref{eq}$ is bounded in $\hrd$,
and there exists a constant $\delta>0$ such that $H_{\ga}(u_{n},u_{n})\geq\delta>0$ for sufficiently large n.
\end{lemma}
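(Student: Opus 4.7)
The plan is to deduce both claims directly from the energy identity
\[
E(u_n)=\tfrac12\|u_n\|_{\dot{H}^{\al}(\rd)}^{2}-\tfrac14 H_{\ga}(u_n,u_n),
\]
the mass constraint $M(u_n)=q$, and the key estimate \eqref{h_bdd} that was already derived in the proof of Lemma \ref{L:eq_neg}. Note that along a minimizing sequence, $E(u_n)\to E_q$ and in particular $\{E(u_n)\}$ is bounded.

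For the $\hrd$-boundedness: the $L^{2}$ part of the norm is trivial since $\|u_n\|_{2}^{2}=q$, so I only need to control $\|u_n\|_{\dot{H}^{\al}}$. Applying \eqref{h_bdd} with, say, $\ep=\tfrac14$, and using $\|u_n\|_{2}=q^{1/2}$, I obtain
\[
E(u_n)\;\geq\;\tfrac12\|u_n\|_{\dot{H}^{\al}}^{2}-\ep\|u_n\|_{\dot{H}^{\al}}^{2}-C_\ep q^{\frac{4\al-\ga}{2\al-\ga}}
\;=\;\tfrac14\|u_n\|_{\dot{H}^{\al}}^{2}-C_\ep q^{\frac{4\al-\ga}{2\al-\ga}}.
\]
Rearranging and using that $E(u_n)$ is bounded yields a uniform bound on $\|u_n\|_{\dot{H}^{\al}}$, hence on $\|u_n\|$. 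The only thing to watch is that $\ep<\tfrac12$ so that the coefficient of $\|u_n\|_{\dot H^\al}^2$ stays positive after absorbing the nonlinear term; this is the entire ``subcritical'' content of the assumption $\ga<2\al$, which is what made \eqref{h_bdd} usable in the first place.

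For the lower bound on $H_{\ga}(u_n,u_n)$: by Lemma \ref{L:eq_neg}, $E_q<0$, so for all $n$ large enough $E(u_n)\leq E_q/2<0$. From the energy identity,
\[
\tfrac14 H_{\ga}(u_n,u_n)\;=\;\tfrac12\|u_n\|_{\dot{H}^{\al}}^{2}-E(u_n)\;\geq\;-E(u_n)\;\geq\;-\tfrac{E_q}{2},
\]
since the kinetic term is nonnegative. Setting $\de:=-2E_q>0$ gives $H_{\ga}(u_n,u_n)\geq \de$ for sufficiently large $n$.

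There is no real obstacle here: both claims are essentially immediate consequences of \eqref{h_bdd} and the sign $E_q<0$. The subtlety, if any, lies in having set up \eqref{h_bdd} correctly in Lemma \ref{L:eq_neg} via Hardy's inequality, the embedding $\dot H^{\al}\hookrightarrow \dot H^{\ga/2}$ (interpolation with $L^2$), and Young's inequality; once that estimate is in hand, the present lemma is a short accounting argument.
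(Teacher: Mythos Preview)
Your proof is correct and follows essentially the same approach as the paper: for boundedness you invoke the estimate \eqref{h_bdd} with $\ep<\tfrac12$ to absorb the nonlinear term into the kinetic one, and for the lower bound on $H_\ga(u_n,u_n)$ you use $E_q<0$ together with the energy identity. The only cosmetic difference is that the paper phrases the second part as a contradiction (assuming a subsequence with $H_\ga(u_{n_k},u_{n_k})\to 0$ and deducing $E_q\geq 0$), whereas you extract the explicit constant $\delta=-2E_q$ directly; the content is identical.
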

\begin{proof}
Firstly, it follows from \eqref{h_bdd} in Lemma \ref{L:eq_neg} that
$$
\frac14\iint_{\rd\times\rd}\xy|u_n(x)|^2|u_n(y)|^2dxdy
\leq \ep\|u_n\|^2+C_\ep\|u_n\|^{\frac{8\al-2\ga}{2\al-\ga}}_2.
$$
From this, we deduce that
\begin{equation}
\begin{split}
\frac12\|u_n\|^2&\leq E(u_n)+\frac12\|u_n\|^2_2+\frac14\iint_{\rd\times\rd}\xy|u_n(x)|^2|u_{n}(y)|^2dxdy\\
&\leq E(u_n)+\frac{1}{2}q+\ep\|u_n\|^2+C_\ep q^{\frac{4\al-\ga}{2\al-\ga}}
\end{split}
\end{equation}
Since $\{u_n\}$ is a minimizing sequence, we can get the result by
taking $\ep<\frac 12$.

For the second part, suppose that the lemma were false. Then we
could find subsequences $\{u_{n_k}\}$ such that
$$
\iint_{\rd\times\rd}\xy|u_{n_k}(x)|^2|u_{n_k}(y)|^2dxdy \rightarrow 0,\
as\ k\rightarrow+\infty.
$$
By the definition of energy, it follows immediately that
$$
E(u_{n_k})\rightarrow E_q\geq0,\
as\ k\rightarrow+\infty,
$$
which contradicts Lemma \ref{L:eq_neg}.
\end{proof}

\begin{lemma}\label{L:min_van}
Suppose $\{u_n\}$ is a minimizing sequence for the problem \eqref{eq} and satisfying
\[
\sup_{y\in\rd}\int_{B(y,r)}|u_n(x)|^2dx\rightarrow0,
\]
then, we have
\[
\iint_{\rd\times\rd}\xy|u_n(x)|^2|u_n(y)|^2dxdy\rightarrow0, \ as\ n\rightarrow\infty.
\]
\end{lemma}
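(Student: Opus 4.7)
The strategy is to split the kernel $|x-y|^{-\ga}$ at some radius $R$ and exploit the vanishing hypothesis on the inner piece. On the far region $\{|x-y|>R\}$ the kernel is uniformly bounded by $R^{-\ga}$, so the mass constraint $\|u_n\|_2^2=q$ gives
\[
\iint_{|x-y|>R}\frac{|u_n(x)|^2|u_n(y)|^2}{|x-y|^\ga}\,dxdy \le R^{-\ga}q^2,
\]
which can be made smaller than $\ep/2$ by choosing $R$ large. I would fix such an $R$ first and then let $n\to\infty$ to kill the near piece.

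For the near piece I write it as $\int|u_n(y)|^2V_n(y)\,dy$ with $V_n(y):=\int_{B(y,R)}|u_n(x)|^2|x-y|^{-\ga}dx$ and apply H\"older with exponents $(p,q)$ satisfying $1/p+1/q=1$ and $\ga q<d$, which yields $V_n(y)\le CR^{d/q-\ga}\|u_n\|_{L^{2p}(B(y,R))}^2$. To prepare for interpolation I further require $2<2p<2_\al^*:=2d/(d-2\al)$; combining both conditions forces $p\in(d/(d-\ga),\,d/(d-2\al))$, a nonempty range precisely because $\ga<2\al$. This is the single place where the mass-subcritical hypothesis genuinely enters. With such a $p$, a local H\"older interpolation gives
\[
\|u_n\|_{L^{2p}(B(y,R))}\le\|u_n\|_{L^2(B(y,R))}^{\theta}\,\|u_n\|_{L^{2_\al^*}(B(y,R))}^{1-\theta}
\]
for some $\theta\in(0,1)$. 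The second factor is bounded uniformly in $y,n$ by the Sobolev embedding $\hrd\hookrightarrow L^{2_\al^*}$ together with the $\hrd$-boundedness from Lemma \ref{L:min_bdd}; and covering $B(y,R)$ by a finite (depending only on $d$ and $R/r$) collection of balls of radius $r$ upgrades the hypothesis $\sup_y\|u_n\|_{L^2(B(y,r))}^2\to 0$ to $\sup_y\|u_n\|_{L^2(B(y,R))}^2\to 0$. Hence $\sup_y V_n(y)\to 0$ for the fixed $R$, and the near piece is at most $(\sup_y V_n)\cdot q\to 0$.

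Combining, given $\ep>0$ choose $R$ so that the far piece is $<\ep/2$, then choose $n$ large enough that the near piece is $<\ep/2$; this yields the full integral $<\ep$ for all large $n$. The main obstacle is the exponent bookkeeping in the H\"older step: one must verify that the interval $(d/(d-\ga),d/(d-2\al))$ is nonempty, which uses exactly the subcritical hypothesis $\ga<2\al$ and the condition $d>2\al$ (automatic from $d\ge 2$ and $\al<1$). Everything else follows from standard H\"older and Sobolev interpolation and does not require the stronger tools (Lemma \ref{L:rellich} or the commutator estimates) already collected in Section \ref{S:pre}.
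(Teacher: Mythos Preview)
Your argument is correct and follows the same near/far splitting as the paper, but the treatment of the near piece is genuinely different. The paper covers all of $\rd$ by balls $B(z_i,r)$ with bounded overlap, covers the $r_\ep$-neighbourhood of each ball by finitely many further balls, and then interpolates using Hardy's inequality (Lemma \ref{L:hardy}) to extract a factor $\bigl(\sup_y\int_{B(y,r)}|u_n|^2\bigr)^{1-\ga/\al}$. Your route instead applies H\"older directly to the kernel on a single ball to land in $L^{2p}$, then interpolates between $L^2$ and the critical Sobolev exponent $L^{2_\al^*}$; the global covering is replaced by one finite covering of $B(y,R)$ by $r$-balls. Your approach is arguably more elementary---it avoids the bounded-overlap covering and Hardy's inequality, using only local H\"older and the Sobolev embedding $\hrd\hookrightarrow L^{2_\al^*}$---and it makes the role of the subcritical condition $\ga<2\al$ completely explicit as the nonemptiness of the exponent window $\bigl(d/(d-\ga),\,d/(d-2\al)\bigr)$. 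The paper's approach, on the other hand, stays within the tools already assembled in Section \ref{S:pre} and yields a slightly more quantitative bound with an explicit power of the concentration function. Both reach the same conclusion with comparable effort.
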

\begin{proof}
Consider a minimizing sequence $\{u_n\}$ for problem \eqref{eq}.
For every $\ep>0$, since $\{u_n\}$ are bounded in $L^2$, we can find $r_\ep>0$ such that
\[
\iint_{|x-y|\geq r_\ep}\xy|u_n(x)|^2|u_n(y)|^2dxdy
\leq\frac{\ep}{2}.
\]
Next we divide the domain. For every positive $r$, we
can find countable balls $\{B(z_i,r)\}$ such that
\[
\rd\subset\bigcup^\infty_{i=1}B(z_i,r),
\]
and  every point in $\rd$ belongs to at most $d+1$ of these balls,
which implies
\begin{equation}
\sum^\infty_{i=1}\int_{B(z_i,r)}|u_n(x)|^2dx\leq (d+1)\|u_n\|^2_2.
\end{equation}
Consequently, if $x$ in some $B(z_i,r)$ and  $|x-y|\leq r_\ep$,
then there exists at most $N_\ep$ balls such that
\[
\{y \in \rd;\  |x-y|\leq r_\ep,\ x \in B(z_i,r) \}\subset\bigcup^{N_\ep}_{k=1}B(z_{i_{k}},r),
\]
where $N_\ep$ only depends on $\ep$. By the above facts, using
H\"older's and  Hardy's inequalities, we have
\begin{align*}
&\iint_{|x-y|\leq r_\ep}\xy|u_{n}(x)|^2|u_{n}(y)|^2dxdy\\
&\leq \sum^\infty_{i=1}\int_{B_x(z_i,r)}
\left[\sum^{N_\ep}_{k=1}\int_{B_y(z_{i_{k}},r)}\xy|u_{n}(x)|^2|u_{n}(y)|^2dy\right]dx\\
&\leq\sum^\infty_{i=1}\|u_{n}(x)\|^2_{L^2(B_x(z_i,r))}
\left[\sum^{N_\ep}_{k=1}\sup_{x \in B_x(z_i,r)}\int_{B_y(z_{i_{k}},r)}\xy|u_{n}(y)|^2dy\right]\\
&\leq\sum^\infty_{i=1}\|u_{n}(x)\|^2_{L^2(B_x(z_i,r))}
\sum^{N_\ep}_{k=1}\sup_{x \in B_x(z_i,r)}
\|u_{n}(x)\|^{2-\frac{2\ga}{\al}}_{L^2(B_y(z_{i_{k}},r))}
\left(\int_{B_y(z_{i_{k}},r)}\xy|u_{n}(y)|^2dy\right)^{\frac{\ga}{\al}}\\
&\leq CN_\ep\|u_{n}(x)\|^{\frac{\ga}{\al}}\left(\sum^\infty_{i=1}\|u_{n}(x)\|^2_{L^2(B_x(z_i,r))}\right)
\left(\sup_{y \in \mathbb{R}}\int_{B(y,r)}|u_{n}(x)|^{2}dx\right)^{1-\frac{\ga}{\al}}\\
&\leq C(d+1)N_\ep\|u_{n}\|^2_2\|u_{n}(x)\|^{\frac{\ga}{\al}}
\left(\sup_{y \in \mathbb{R}}\int_{B(y,r)}|u_{n}(x)|^{2}dx\right)^{1-\frac{\ga}{\al}}.
\end{align*}
Finally, taking $n$ to $\infty$, the second part can also be bounded
by $\frac{\ep}{2}$, which proves the lemma.
\end{proof}

\begin{proof}[Proof of Lemma \ref{L:van}]
Suppose, arguing by contradiction, that $\be=0$, then there exist a
positive $r_0$ and a subsequence $\{u_{n_k}\}$ of a minimizing
sequence $\{u_{n}\}$ such that
$$
\sup_{y \in \rd}\int_{B(y,r_0)}|u_{n_k}(x)|^{2}dx \rightarrow 0, \  as\ k\rightarrow\infty.
$$
Since $\{u_{n_k}\}$ is also a minimizing sequence, by Lemma \ref{L:min_van},
it follows that
$$
\iint_{\rd\times\rd}\xy|u_{n_k}(x)|^2|u_{n_k}(y)|^2dxdy
\rightarrow 0, \ as \ k\rightarrow\infty,
$$
which contradicts Lemma \ref{L:min_bdd}.
\end{proof}

\begin{lemma}\label{L:dic_1}
Let $q_1, q_2$ be positive real numbers, then
$E_{q_1+q_2}<E_{q_1}+E_{q_2}$.
\end{lemma}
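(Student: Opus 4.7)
The strategy is to deduce the strict subadditivity $E_{q_1+q_2}<E_{q_1}+E_{q_2}$ from a strict scaling inequality of the form $E_{\theta q}<\theta E_q$ for every $\theta>1$ and every $q>0$. The latter will come from a one-parameter amplitude rescaling together with the uniform non-vanishing bound on $H_\ga(u_n,u_n)$ supplied by Lemma \ref{L:min_bdd}.

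The first step is to exploit the simple dilation $u_\theta(x):=\sqrt{\theta}\,u(x)$. Since $M(u_\theta)=\theta M(u)$, $\|u_\theta\|_{\dhrd}^{2}=\theta\|u\|_{\dhrd}^{2}$, and $H_\ga(u_\theta,u_\theta)=\theta^{2}H_\ga(u,u)$, a direct computation yields
\begin{equation*}
E(u_\theta)=\theta E(u)-\tfrac{\theta(\theta-1)}{4}\iint_{\rd\times\rd}\xy|u(x)|^{2}|u(y)|^{2}\,dx\,dy.
\end{equation*}
In particular, if $M(u)=q$ then $M(u_\theta)=\theta q$, and so $E_{\theta q}\leq E(u_\theta)$.

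Next, I would take $\{u_n\}\subset\hrd$ to be a minimizing sequence for $E_q$. Lemma \ref{L:min_bdd} supplies a constant $\de>0$ with $H_\ga(u_n,u_n)\geq\de$ for all sufficiently large $n$. Applying the scaling identity to each $u_n$ and passing to the limit $n\to\infty$ gives
\begin{equation*}
E_{\theta q}\leq\theta E_q-\tfrac{\theta(\theta-1)\de}{4}<\theta E_q\qquad\text{for every }\theta>1,\ q>0.
\end{equation*}
Dividing by $\theta q>0$ shows that $q\mapsto E_q/q$ is strictly decreasing, and both sides remain negative throughout by Lemma \ref{L:eq_neg}. For $q_1,q_2>0$, setting $q:=q_1+q_2>q_j$ gives $E_q/q<E_{q_j}/q_j$ for $j=1,2$; multiplying by $q_j>0$ and summing over $j$ produces
\begin{equation*}
E_{q_1+q_2}=E_q=\tfrac{q_1}{q}E_q+\tfrac{q_2}{q}E_q<E_{q_1}+E_{q_2},
\end{equation*}
which is the desired strict subadditivity.

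The main obstacle is ensuring \emph{strict} inequality at the scaling step: without the uniform positive lower bound on $H_\ga(u_n,u_n)$ along a minimizing sequence, the amplitude rescaling only gives $E_{\theta q}\leq\theta E_q$, and consequently only the weak version $E_{q_1+q_2}\leq E_{q_1}+E_{q_2}$, which would not suffice to rule out the dichotomy alternative in the concentration-compactness framework. Lemma \ref{L:min_bdd} is precisely what removes this obstruction.
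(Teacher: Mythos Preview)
Your proof is correct, but it follows a genuinely different route from the paper's. The paper exploits the joint homogeneity of the kinetic and potential terms by using a \emph{two-parameter} scaling $u_\lambda(x)=\lambda^{\gamma_1}u(\lambda^{\gamma_2}x)$, choosing $\gamma_1,\gamma_2$ so that both terms in $E$ scale by the same power and $M(u_\lambda)=\lambda M(u)$. This yields an exact power law $E_{\lambda q}=\lambda^{p}E_q$ with $p=\frac{4\alpha-\gamma}{2\alpha-\gamma}>1$ (the paper writes a slightly different exponent, but the point is $p>1$). Since $E_1<0$ by Lemma~\ref{L:eq_neg}, the strict superadditivity of $t\mapsto t^{p}$ on $(0,\infty)$ immediately gives $E_{q_1+q_2}<E_{q_1}+E_{q_2}$; Lemma~\ref{L:min_bdd} is not invoked at all.

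Your approach, by contrast, uses only the \emph{amplitude} scaling $u_\theta=\sqrt{\theta}\,u$, under which kinetic and potential terms scale with different powers ($\theta$ versus $\theta^{2}$). You then extract strictness from the uniform lower bound $H_\gamma(u_n,u_n)\geq\delta>0$ along minimizing sequences supplied by Lemma~\ref{L:min_bdd}. This is the classical Lions mechanism and is more robust: it does not require the two terms of the energy to be jointly homogeneous under any single dilation, only that the nonlinear term be super-quadratic in $u$ and not vanish along minimizers. The paper's method, on the other hand, yields the stronger structural information $E_q=q^{p}E_1$, at the cost of relying on the special scaling compatibility available here. (Your parenthetical remark that ``both sides remain negative'' is harmless but unnecessary: the strict inequality $E_{\theta q}<\theta E_q$ alone already forces $q\mapsto E_q/q$ to be strictly decreasing, regardless of sign.)
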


\begin{proof}
Given $u \in \hrd$ with $\|u\|^2_2=q$, we let $u_\la(x)=\la^{\ga_1}u(\la^{\ga_2}x)$,
where $\ga_1=\frac{2\al-\ga+d}{8\al-2\ga}$ and $\ga_2=\frac{1}{2\al-\ga}$.
Then $\|u_\la\|^2_2=\la q$ and
\[
E(u_\la)=\la^{\frac{8\al-2\ga}{2\al-\ga}}E(u).
\]
Therefore,
\[
E_{\la q}=\inf_{\substack{u \in \hrd \\ M(u)=\la q}}E(u_\la)
=\la^{\frac{8\al-2\ga}{2\al-\ga}}\inf_{\substack{u \in \hrd \\ M(u)=q}}E(u)
=\la^{\frac{8\al-2\ga}{2\al-\ga}}E_q.
\]
According to Lemma \ref{L:eq_neg}, we know that $E_q$ is negative for all $q>0$.
For $\frac{8\al-2\ga}{2\al-\ga}>1$, it follows easily from Jensen's inequality that
\[
E_{q_1+q_2}=(q_1+q_2)^{\frac{8\al-2\ga}{2\al-\ga}}E_1
<(q_1^{\frac{8\al-2\ga}{2\al-\ga}}+q_2^{\frac{8\al-2\ga}{2\al-\ga}})E_1
=E_{q_1}+E_{q_2}.
\]
\end{proof}

\begin{lemma}\label{L:dic_2}
Suppose $0<\be<q$, then $E_\be+E_{q-\be}\leq E_q$.
\end{lemma}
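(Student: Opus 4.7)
The plan is to exploit the dichotomy geometry of the L\'evy concentration function: when $\be\in(0,q)$, each $u_n$ in a minimising sequence for $E_q$ can be split into two pieces of mass near $\be$ and $q-\be$ whose supports separate, so the energies decouple in the limit and $E_q\ge E_\be+E_{q-\be}$.

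First I would extract, for each $\ep>0$, a radius $R_\ep$, a sequence $R_n\to\infty$ (with $R_n>8R_\ep$), a threshold $n_\ep$ and centres $\{y_n\}\subset\rd$ such that for all $n\ge n_\ep$,
\begin{equation*}
\int_{B(y_n,R_\ep)}|u_n|^2\ge \be-\ep,\qquad \int_{B(y_n,R_n)}|u_n|^2\le \be+\ep,
\end{equation*}
so the mass in the annulus $R_\ep\le|x-y_n|\le R_n$ is $\le 2\ep$; this is the standard extraction from $Q_n\to Q$ and $Q(r)\to\be$. Next fix $\phi\in C_c^\infty(\rd;[0,1])$ with $\phi\equiv 1$ on $B(0,1)$ and $\mathrm{supp}\,\phi\subset B(0,2)$ and set
\begin{equation*}
v_n^1(x)=\phi\!\left(\frac{x-y_n}{R_\ep}\right)u_n(x),\qquad v_n^2(x)=\left[1-\phi\!\left(\frac{2(x-y_n)}{R_n}\right)\right]u_n(x),\qquad w_n=u_n-v_n^1-v_n^2.
\end{equation*}
Then $v_n^1,v_n^2$ have disjoint supports separated by $d_n:=R_n/2-2R_\ep\to\infty$, $\|w_n\|_2^2\le 2\ep$, and $\bigl|\|v_n^1\|_2^2-\be\bigr|,\bigl|\|v_n^2\|_2^2-(q-\be)\bigr|\lesssim\ep$.

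The core step is the decoupling estimate $E(u_n)\ge E(v_n^1)+E(v_n^2)-C\sqrt{\ep}-o_n(1)$. For the Hartree term, expand $|u_n|^2=|v_n^1|^2+|v_n^2|^2+|w_n|^2+2\mathrm{Re}\,(v_n^1+v_n^2)\overline{w_n}$ (using $v_n^1\overline{v_n^2}\equiv 0$) and insert into $\iint |u_n(x)|^2|u_n(y)|^2|x-y|^{-\ga}dxdy$: the pure separated cross term is bounded by $d_n^{-\ga}\|v_n^1\|_2^2\|v_n^2\|_2^2\le d_n^{-\ga}q^2\to 0$, while every $w_n$-cross term is absorbed by Hardy's inequality (Lemma \ref{L:hardy}), the uniform $H^\al$-bound from Lemma \ref{L:min_bdd}, and $\|w_n\|_2^2\le 2\ep$, producing an $O(\sqrt{\ep})$ error. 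For the kinetic term I would use the Gagliardo formula from Lemma \ref{L:fra_lap},
\begin{equation*}
\|(-\Delta)^{\al/2}u\|_2^2=\tfrac{C_{d,\al}}{2}\iint\frac{|u(x)-u(y)|^2}{|x-y|^{d+2\al}}dxdy,
\end{equation*}
expand $u_n=v_n^1+v_n^2+w_n$ in the square, and note that after the disjoint-support simplification the pure $v_n^1$--$v_n^2$ cross contribution reduces to $-2C_{d,\al}\iint_{A\times B}v_n^1(x)\overline{v_n^2(y)}|x-y|^{-(d+2\al)}dxdy$; using $|x-y|\ge \tfrac12|y-y_n|$ for $|y-y_n|$ large together with Cauchy--Schwarz in $y$ bounds it by $CR_n^{-\al}\|u_n\|_2^2=o_n(1)$. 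The remaining $w_n$-cross terms are again absorbed using the uniform $H^\al$-bound and $\|w_n\|_2^2\le 2\ep$.

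Finally, write $q_n^i:=\|v_n^i\|_2^2\to\be_i^*$ with $|\be_1^*-\be|,|\be_2^*-(q-\be)|\le C\ep$. Since $v_n^i$ is admissible for the problem defining $E_{q_n^i}$, we have $E(v_n^i)\ge E_{q_n^i}$, and the scaling identity $E_{\la q}=\la^{(8\al-2\ga)/(2\al-\ga)}E_q$ produced inside the proof of Lemma \ref{L:dic_1} makes $q\mapsto E_q$ continuous, so $\liminf_n E(v_n^i)\ge E_{\be_i^*}$. Combining with the decoupling inequality and $E(u_n)\to E_q$ gives $E_q\ge E_{\be_1^*}+E_{\be_2^*}-C\sqrt{\ep}$; letting $\ep\to 0$ and using continuity of $E_\cdot$ yields the desired $E_q\ge E_\be+E_{q-\be}$. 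The main obstacle is the kinetic decoupling: because $(-\Delta)^{\al/2}$ is nonlocal, the cutoffs $\chi_i^n$ do not split $\|(-\Delta)^{\al/2}u_n\|_2^2$ cleanly as they would a classical gradient, and the Gagliardo double integral must be combined delicately with the divergent separation $d_n\to\infty$ to kill every mixed term; the Hartree piece is comparatively painless because the explicit kernel $|x-y|^{-\ga}$ and Lemma \ref{L:hardy} exploit the separation directly.
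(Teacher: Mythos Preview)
Your overall strategy---the Lions dichotomy splitting---is sound, and your handling of the Hartree cross terms and of the pure $v_n^1$--$v_n^2$ kinetic cross term (via the separation $d_n\to\infty$ in the Gagliardo double integral) is correct. But there is a genuine gap in the kinetic decoupling: you claim the remaining cross terms $\langle v_n^i,w_n\rangle_{\dot H^\al}$ are ``absorbed using the uniform $H^\al$-bound and $\|w_n\|_2^2\le 2\ep$'', yet a small $L^2$ norm for $w_n$ gives no control whatsoever on a $\dot H^\al$ inner product. In general $\|w_n\|_{\dot H^\al}$ is only bounded, not $O(\sqrt\ep)$, so Cauchy--Schwarz on the Gagliardo integral yields at best an $O(1)$ bound on these cross terms, which is useless for the desired inequality $E(u_n)\ge E(v_n^1)+E(v_n^2)-C\sqrt\ep-o_n(1)$.

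The paper sidesteps this by handling the kinetic part differently. It splits $u_n$ into only \emph{two} pieces $v_n=\phi_r(\cdot-y_n)u_n$ and $w_n=\tilde\phi_r(\cdot-y_n)u_n$ with $\phi_r+\tilde\phi_r\equiv 1$, and invokes the commutator estimate (Lemma~\ref{L:com_est}) to obtain
\[
\|(-\Delta)^{\al/2}v_n\|_2^2+\|(-\Delta)^{\al/2}w_n\|_2^2\le \int_{\rd}\bigl(\phi_r^2+\tilde\phi_r^2\bigr)\,|(-\Delta)^{\al/2}u_n|^2\,dx+c\ep\le \|(-\Delta)^{\al/2}u_n\|_2^2+c\ep,
\]
using $\phi_r^2+\tilde\phi_r^2\le 1$. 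This is exactly the localization inequality in the needed direction, with no cross terms left to estimate. To repair your three-piece argument you would have to import the same commutator control of $[(-\Delta)^{\al/2},\chi_n^j]$ (equivalently, an IMS-type localization formula for the fractional Laplacian); the smallness of $\|w_n\|_2$ alone is not enough.
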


\begin{proof}
For every $\ep>0$, there exists $r_\ep>0$ such that
\begin{equation}\label{rep}
\iint_{|x-y|\geq r_\ep}\xy|u_{n}(x)|^2|u_{n}(y)|^2dxdy
\leq\frac{\ep}{2},
\end{equation}
and
\[
\be-\frac{\ep}{4}<Q(r_\ep)\leq Q(3r_\ep)\leq \be.
\]
Then there exists $N_\ep \in \mathbb{N}^+$ such that for every $n\geq N_\ep$, we have
\[
\be-\frac{\ep}{2}<Q_n(r_\ep)\leq Q_n(3r_\ep)< \be+\frac{\ep}{2}.
\]
Next we choose $\{y_n\}\subset\rd$ so that
\begin{equation}\label{ring_bbd}
\be-\ep<\int_{B(y_n,r_\ep)}|u_n(x)|^2dx\leq\int_{B(y_n,3r_\ep)}|u_n(x)|^2dx<\be+\ep.
\end{equation}
Now let us define $\phi_r(x)=\phi(\frac xr)$ and $\tilde{\phi}_r(x)=\tilde{\phi}(\frac xr)$,
where $\phi \in C^\infty_0(B(0,2))$ is a smooth cutoff function satisfying
\begin{equation}\label{vw}
0\leq\phi(x)\leq1\ \mathrm{and}\
\phi(x)=\left\{
\begin{aligned}& 1,&&|x|\leq 1, \\
&0,&&|x|\geq2,
\end{aligned}
\right.
\end{equation}
and $\tilde{\phi}(x)=1-\phi(x)$.
With this notation, we write
\begin{equation}
\begin{split}
v_n(x)&=\phi_r(x-y_n)u_n(x),\\
w_n(x)&=\tilde{\phi}_r(x-y_n)u_n(x).
\end{split}
\end{equation}
It follows immediately
\begin{equation}
\begin{split}
\be-\ep<&\int_{\rd}|v_n(x)|^2dx< \be+\ep,\\
q-\be-\ep<&\int_{\rd}|w_n(x)|^2dx< q-\be+\ep.
\end{split}
\end{equation}
The conclusion follows if
\begin{equation}\label{reduce}
E(v_n)+E(w_n)\leq E(u_n)+c\ep,
\end{equation}
for some positive constant $c$.

To see this, note that from \eqref{vw}, there exist $\mu_n, \nu_n
\in [1-\ep,1+\ep]$ such that
\[
\|\mu_n v_n\|^2_2=\be \ \mathrm{and} \ \|\mu_n w_n\|^2_2=q-\be.
\]
we therefore deduce that
\begin{align*}
E_\be&\leq E(\mu_n v_n)\leq E(v_n)+c\ep,\\
E_{q-\be}&\leq E(\mu_n w_n)\leq E(w_n)+c\ep,
\end{align*}
for some positive constant $c$ independent of $\ep$.
Combining the above two inequalities and using (\ref{reduce}), we have
\[
E_\be+E_{q-\be}\leq E(v_n)+E(w_n)+c\ep\leq E(u_n)+c\ep.
\]
Passing to the limit, we can then prove the Lemma.

To sum up, what is left is to show (\ref{reduce}).
According to the definitions of $v_n$ and $w_n$, we have
\begin{align*}
E(v_n)+E(w_n)&=\frac12\int_{\rd}|(-\Delta)^{\frac{\alpha}{2}}v_n(x)|^2 dx
+\frac12\int_{\rd}|(-\Delta)^{\frac{\alpha}{2}}w_n(x)|^2dx\\
&-\frac14\iint_{\rd\times\rd}\xy\left[|v_n(x)|^2|v_n(y)|^2+|w_n(x)|^2|w_n(y)|^2\right]dxdy.
\end{align*}
Applying Lemma \ref{L:com_est} and using the Sobolev's inequalities, we obtain
\begin{align*}
\int_{\rd}|(-\Delta)^{\frac{\alpha}{2}}&[\phi_r(x-y_n)u_n(x)]|^2 dx
-\int_{\rd}\phi^2_r(x-y_n)|(-\Delta)^{\frac{\alpha}{2}}u_n(x)|^2 dx\\
&\leq C(\|\nabla\phi_r\|_{\frac{d}{1-\al}}
\|(-\Delta)^{\frac{\alpha-1}{2}}u_n\|_{\frac{2d}{d-2(\al-1)}}
+\|(-\Delta)^{\frac{\alpha}{2}}\phi_r\|_{2+\frac{d}{\al}}\|u_n\|_{2+\frac{4\al}{d}})\\
&\leq C\left(\frac{1}{r^d}\|\nabla\phi\|_{\frac{d}{1-\al}}\|u_n\|_{2}
+\frac{1}{r^{\frac{2\al^2}{2\al+d}}}
\|(-\Delta)^{\frac{\alpha}{2}}\phi\|_{2+\frac{d}{\al}}\|u_n\|\right).
\end{align*}
After taking $r$ larger enough, we derive from the above inequality that
\[
\int_{\rd}|(-\Delta)^{\frac{\alpha}{2}}[\phi_r(x-y_n)u_n(x)]|^2 dx
\leq\int_{\rd}\phi^2_r(x-y_n)|(-\Delta)^{\frac{\alpha}{2}}u_n(x)|^2 dx+c\ep.
\]
In the same way, we  see that
\[
\int_{\rd}|(-\Delta)^{\frac{\alpha}{2}}[\tilde{\phi}_r(x-y_n)u_n(x)]|^2 dx
\leq\int_{\rd}\tilde{\phi}^2_r(x-y_n)|(-\Delta)^{\frac{\alpha}{2}}u_n(x)|^2 dx+c\ep.
\]
Recalling $0\leq\phi,\tilde{\phi}\leq1$, we conclude from the above two inequalities that
\[
\int_{\rd}|(-\Delta)^{\frac{\alpha}{2}}v_n(x)|^2 dx
+\int_{\rd}|(-\Delta)^{\frac{\alpha}{2}}w_n(x)|^2dx
\leq\int_{\rd}|(-\Delta)^{\frac{\alpha}{2}}u_n(x)|^2 dx+c\ep.
\]
Now it remains to prove
\begin{equation}\label{non}
\iint_{\rd\times\rd}\xy\left[
|u_n(x)|^2|u_n(y)|^2-|v_n(x)|^2|v_n(y)|^2-|w_n(x)|^2|w_n(y)|^2\right]dxdy\leq c\ep
\end{equation}
Expanding the left hand side of \eqref{non} and combining the
equivalent terms, we have
\begin{equation}\label{begin}
\begin{split}
2\iint_{\rd\times\rd}\xy(
&|v_n(x)|^2|w_n(y)|^2+2|v_n(x)||w_n(x)||v_n(y)|^2\\
+2&|v_n(x)||w_n(x)||w_n(y)|^2+2|v_n(x)||v_n(y)||w_n(x)||w_n(y)| )dxdy
\end{split}
\end{equation}
Indeed, except the first term $|v_n(x)|^2|w_n(y)|^2$,
the remainders are integral on the ring $B(y_n,2r_\ep)\setminus B(y_n,r_\ep)$
in $\rd_x$ or $\rd_y$ (or both).
Therefore, from \eqref{ring_bbd}, we have
\begin{equation}
\begin{split}
\iint_{\rd\times\rd}&\xy|v_n(x)||w_n(x)||v_n(y)|^2dxdy\\
&\leq\sup_{x \in\rd}\int_{B_y(y_n,2r)}\xy|v_n(y)|^2dy
\int_{B_x(y_n,2r+r_\ep)\setminus B_x(y_n,r)}|v_n(x)||w_n(x)|dy\\
&\leq\|u_n\|\int_{B(y_n,2r_\ep)\setminus B(y_n,r_\ep)}|u_n(x)|^2dx\leq c\ep
\end{split}
\end{equation}
Similarly,
\begin{equation}
\begin{split}
\iint_{\rd\times\rd}\xy(|v_n(x)||w_n(x)||w_n(y)|^2+|v_n(x)||v_n(y)||w_n(x)||w_n(y)|)dxdy
\leq c\ep
\end{split}
\end{equation}
To estimate the first term, recalling \eqref{rep},
we only need to deal with the integral
on the set $\{(x,y) \in \rd\times\rd;\ |x-y|\leq r_\ep\}$.
Similar arguments as above imply that
\begin{equation}\label{end}
\begin{split}
\iint_{|x-y|\leq r_\ep}&\xy|v_n(x)|^2|w_n(y)|^2dxdy\\
&\leq \sup_{y \in\rd}\int_{B_x(y_n,2r_\ep)}\xy|v_n(x)|^2dx
\int_{B_x(y_n,3r_\ep)\setminus B_x(y_n,r_\ep)}|w_n(y)|^2dy\\
&\leq\|u_n\|\int_{B(y_n,3r_\ep)\setminus B(y_n,r_\ep)}|u_n(x)|^2dx\leq c\ep
\end{split}
\end{equation}
From \eqref{begin}-\eqref{end}, we proved \eqref{non}. This finishes the proof.

\end{proof}

\begin{proof}[Proof of Theorem \ref{T:exi}]
Recalling the definition of $\be$ in \eqref{beta}, by Lemma
\ref{L:van}, Lemma \ref{L:dic_1} and Lemma \ref{L:dic_2}, we know
that every minimizing sequence $\{u_n\}$ for problem \eqref{eq} has
a subsequence, denoted again by $\{u_n\}$, satisfying
\begin{equation}\label{compact}
\lim_{r\rightarrow+\infty}\lim_{n\rightarrow+\infty}\sup_{y\in\rd}\int_{B(y,r)}|u_n(x)|^2dx
=\be=q,
\end{equation}
which implies that for every positive $\ep>0$, there exist
$r_\ep>0$, $n_\ep\in\mathbb{N}^+$ and $\{y_n\}\subset\rd$ such that
for each $n>n_\ep$ and $r>r_\ep$,
\begin{equation}\label{compact}
\int_{B(y_n,r)}|u_n(x)|^2dx>q-\ep.
\end{equation}

According to Lemma \ref{L:min_bdd}, $\{u_n(\cdot-y_n)\}$ is bounded
in $\hrd$. By choosing subsequence if necessary, there exists
$g\in\hrd$ such that,
\[
u_n(\cdot-y_n)\rightharpoonup g\ \mathrm{weekly\ in}\ \hrd.
\]
We can find $R_\ep>r_\ep$ such that $\|g\|_{L^2(\rd\setminus B(0,R_\ep))}<\frac\ep2$.
Furthermore, by Lemma \ref{L:rellich}, there exists $N_\ep\in\mathbb{N}^+$ with $N_\ep>n_\ep$ such that
for $n>N_\ep$, we have
\[
\|u_n(\cdot-y_n)-g\|_{L^2(B(0,R_\ep))}<\frac\ep2.
\]
It follows immediately from the above that
\begin{equation}
\begin{split}
\|g\|_2&\geq\|u_n\|_2-\|u_n(\cdot-y_n)-g\|_{L^2(B(0,r_\ep))}
-\|u_n(\cdot-y_n)-g\|_{L^2(\rd\setminus B(0,r_\ep))}\\
&\geq \|u_n\|_{L^2(B(y_n,r_\ep)}-\|u_n(\cdot-y_n)-g\|_{L^2(B(0,r_\ep))}
-\|g\|_{L^2(\rd\setminus B(0,r_\ep))}\\
&\geq \sqrt{q-\ep}-\ep,
\end{split}
\end{equation}
which implies, by passing to the limit, $\|g\|^2_2\geq q$. On the
other hand, the weak lower semi-continuous deduces
\begin{equation}
q\leq\|g\|^2_2\leq\liminf_{n\rightarrow+\infty}\|u_n\|^2_2=q.
\end{equation}
Therefore, $\|g\|^2_2= q$, and consequently,
\[
u_n(\cdot-y_n)\rightarrow g\ \mathrm{strongly\ in}\ L^2(\rd),
\]
since $\{u_n(\cdot-y_n)\}$ converges weakly in $\hrd$.
Moreover, we have
\begin{equation}\label{lim_1}
\begin{split}
\iint_{\rd\times\rd}\xy&(|u_n(x-y_n)|^2|u_n(y-y_n)|^2-|g(x)|^2|g(y)|^2)dxdy\\
&\leq\iint_{\rd\times\rd}\xy[|u_n(x-y_n)|^2(|u_n(y-y_n)|^2-|g(y)|^2)\\
&\qquad +|g(y)|^2(|u_n(x-y_n)|^2-|g(x)|^2)]dxdy\\
&\leq C(\|u_n\|+\|g\|)\||u_n(\cdot-y_n)|^2-|g|^2\|_2\\
&\leq C(\|u_n\|+\|g\|)(\|u_n\|_2+\|g\|_2)\|u_n(\cdot-y_n)-g\|_2\rightarrow0,
\end{split}
\end{equation}
as $n\rightarrow\infty$.
Applying the weak lower semi-continuous again, we deduce that
\begin{equation}\label{lim_2}
\|g\|_{\dhrd}\leq\liminf_{n\rightarrow+\infty}\|u_n\|_{\dhrd}.
\end{equation}
From \eqref{lim_1} and \eqref{lim_2}, it follows immediately that
\begin{equation}
\be\leq E(g)\leq\liminf_{n\rightarrow+\infty}E(u_n)=\be.
\end{equation}
Hence, $ g$ is a minimizer of problem \eqref{eq} and
\begin{equation}
u_n(\cdot-y_n)\rightarrow g\ in\ \hrd.
\end{equation}
Next, arguing by contradiction, we prove \eqref{min=0}. Assume that
there exist $\ep_0>0$ and a subsequence $\{u_{n_k}\}$ of $\{u_n\}$
such that
\begin{equation}\label{contra}
\inf_{g\in G_q}\|u_{n_k}-g\|\geq\ep_0>0.
\end{equation}
From what has already been proved, we know that there exist a
subsequence of $\{u_{n_k}\}$, denoted again by $\{u_{n_k}\}$, and
$\{y_{n_k}\}\in\rd$ such that
\[
u_{n_k}(\cdot-y_{n_k})\rightharpoonup g\ in\ \hrd.
\]
Since $g(\cdot+y_{n_k})\in G_q$, it follows that
\[
\|u_{n_k}-g(\cdot+y_{n_k})\|=\|u_{n_k}(\cdot-y_{n_k})-g\|\rightarrow0,
\]
which contradicts \eqref{contra}.
\end{proof}

\begin{proof}[Proof of Theorem \ref{T:pro}]
Consider a minimizer $g\in G_q$. Assume that $g\in L^p(\rd)$, then
$(|\cdot|^{-\ga}*|g|^{2})g\in L^p(\rd)$. Note that  equation
\eqref{gq} can be written in the form
\begin{equation}\label{gq_four}
\mathcal{F}^{-1}[(1+|\xi|^{2\al})\mathcal{F}g]=(|\cdot|^{-\ga}*|g|^{2})g.
\end{equation}
It follows that $g\in H^{2\al,p}(\rd)$.
By Sobolev's embedding theorem, we have
\begin{equation}\label{induct}
g\in L^q(\rd)\ \mathrm{for\ all}\ \frac1q\in\left[\frac1p-\frac{2\al}{d},\frac1p\right].
\end{equation}
Consider the sequence $\{q_i\}$ defined by
\begin{equation*}
q_0=2\ \mathrm{and}\ q_{i+1}=\frac{dq_i}{d-2\al q_i}\ \mathrm{for}\ i\in\mathbb{N}^+
\end{equation*}
Since
\begin{equation*}
\frac{1}{q_{i+1}}-\frac{1}{q_i}=-\frac{2\al}{d}<0,
\end{equation*}
we deduce that $\frac{1}{q_i}\rightarrow-\infty$ as $i\rightarrow+\infty$,
so there exist $i_0\in\mathbb{N}^+$ such that
\begin{equation*}
\frac{1}{q_i}>0\ \mathrm{for}\ 0\leq i\leq i_0\
\mathrm{and}\ \frac{1}{q_{i_0+1}}\leq0.
\end{equation*}
By an induction argument and \eqref{induct},
it is not difficulty to show that $g\in L^{q_{i_0}}(\rd)$.
Applying once again \eqref{induct}, we deduce that
\begin{equation}
g\in L^q(\rd)\ \mathrm{for\ all}\
\frac1q\in\left[\frac{1}{q_{i_0+1}},\frac{1}{q_{i_0}}\right].
\end{equation}
In particular, we can take $q=\infty$, so that $g\in L^2(\rd)\bigcap
L^\infty(\rd)$. We obtain immediately that
$(|\cdot|^{-\ga}*|g|^{2})g\in L^2(\rd)\bigcap L^\infty(\rd)$. Using
\eqref{gq_four} again, we have $g\in H^{2\al}_p(\rd)$ for all $ p\in
[2,\,\infty)$. By Sobolev's embedding, $g\in
C^{[2\al],2\al-[2\al]}(\rd)$.

We now turn to the part (2) of Theorem \ref{T:pro}. Arguing by
contradiction, we assume that $|g|\notin S^{'}$. Then, Lemma
\ref{L:hal} gives us
\begin{equation}\label{rad_hal}
\|g^*\|_{\dot{H}^\al(\rd)}\leq\|g\|_{\dot{H}^\al(\rd)}.
\end{equation}
Since $\frac{1}{|x|^{-\ga}} \in S$ and satisfies \eqref{Riesz_sv},
it follows immediately from Lemma \ref{L:Riesz_sv} that
\begin{equation}\label{rad_non}
\iint_{\rd\times\rd}\xy|g(x)|^2|g(y)|^2dxdy
<\iint_{\rd\times\rd}\xy|g^*(x)|^2|g^*(y)|^2dxdy,
\end{equation}
unless $|g|\in S^{'}$ for some $y\in\rd$. Recalling \eqref{lp} and
combining \eqref{rad_hal} and \eqref{rad_non}, we conclude that
\[
\|g^*\|_2=\|g\|_2=q\ \mathrm{and}\ E(g^*)<E(g)=E_q,
\]
which contradicts the definition of $E_q$. Hence, we have proved
$|g|\in S^{'}$. Furthermore $|g|\in G_q$, since $E(|g|)=E(g^*)=q$.

Next we claim that $|g|(x)>0$ for all $x\in\rd$. To this end, we
arguing by contradiction. Suppose that there exists $x_0\in\rd$ such
that $|g|(x_0)=0$. Then, it follows from the equation \eqref{gq}
that $(-\Delta)^\al|g|(x_0)=0$. By Lemma \ref{L:fra_lap}, we have
\begin{equation*}
(-\Delta)^\al |g|(x_0)=C_{d,\al}
\left(\lim_{\ep\rightarrow0}\int_{\ep\leq|x_0-y|\leq r)}\frac{-|g|(y)}{|x_0-y|^{d+2\al}}dy+
\int_{\rd\setminus B(x_0,r)}\frac{-|g|(y)}{|x_0-y|^{d+2\al}}dy\right)=0,
\end{equation*}
which implies
\begin{equation*}
\int_{\rd\setminus B(x_0,r)}\frac{|g|(y)}{|x_0-y|^{d+2\al}}dy=0\ \mathrm{for\ all}\ r>0,
\end{equation*}
Therefore, $|g|\equiv0$ on $\rd$, which contradicts $|g|\in G_q$.
Let $u=\Re g$, $v=\Im g$, we have $g=u+iv$.
It follows that
\begin{align}
(-\Delta)^{\al}u-(|\cdot|^{-\ga}*|g|^{2})u&=\om u\\
(-\Delta)^{\al}v-(|\cdot|^{-\ga}*|g|^{2})v&=\om v,
\end{align}
Repeating the similar argument as before for the linear equation
\begin{equation}\label{linear}
(-\Delta)^{\al}h-(|\cdot|^{-\ga}*|g|^{2})h=\om h,
\end{equation}
we know that $u,v$ are continuous and $|u|,|v|>0$, Therefore, $u$
and $v$ both have constant signs. We claim that there exists
constants $\al,\be$ such that $u=\al|g|$ and $v=\be|g|$. If this
were not the case, there would exist a constant $c$ such that
$w=u-c|g|$ takes both positive and negative values. It is easy to
see that $w$ also satisfies the equation \eqref{linear}, which is a
contradiction. Likewise, the same conclusion is true for $v$. Thus,
we have proved that $g=\al|g|+i\be|g|=e^{i\theta}|g|$, where
$\theta$ is a constant satisfying $\tan(\theta)=\frac{\be}{\al}$.
Thus we have finished our proof of Theorem \ref{T:pro}.
\end{proof}

\begin{remark}
Actually we can prove the existence of radial standing waves of
the equation \eqref{nls} in a much simple way
by symmetric decreasing rearrangements of minimizing sequence in Theorem \ref{T:exi}.
In this way, however, we may not exclude the possibility of nonradial standing waves of the equation \eqref{nls} and could not deduce \eqref{min=0} in Theorem \ref{T:exi},
which plays a key role in the proof of Theorem \ref{T:sta}.
\end{remark}

\begin{proof}[Proof of Theorem \ref{T:sta}]
We arguing by way of contradiction. Suppose that the set $G_q$ is
not $\hrd$-stable. Then there exist $\ep_0>0$, a sequence
$\{u^{(0)}_m\}$ in $\hrd$ and $t_m\in\mathbb{R}$ such that
\begin{equation}\label{sta_1}
\inf_{g\in G_q}\|u^{(0)}_m-g\|<\frac1m,
\end{equation}
and
\begin{equation}\label{sta_2}
\inf_{g\in G_q}\|u_m(t_m)-g\|\geq\ep_0,
\end{equation}
where $u_m\in C(\mathbb{R},\hrd)$ are solutions to NLS \eqref{nls}
with initial date $u_m(x,0)=u^{(0)}_m(x)$. From \eqref{sta_1}, we
have as $m\rightarrow +\infty$
\[
\|u^{(0)}_m\|^2_2\rightarrow q,\qquad\,\, E(u^{(0)}_m)\rightarrow
E_q.
\]
Hence, we can find $\{\mu_m\}\subset\mathbb{R}$, satisfying
$\|\mu_mu^{(0)}_m\|^2_2=q$ and $\mu_m\rightarrow1$, such that
$\{\mu_mu^{(0)}_m\}$ is a minimizing sequence for the problem
\eqref{eq}. By the conservation laws,
\begin{gather*}
\|\mu_mu_m(t_m)\|^2_2=\|\mu_mu^{(0)}_m\|^2_2=q,\\
E(\mu_mu_m(t_m))\rightarrow E_q,
\end{gather*}
we know that $\{\mu_mu_m(t_m)\}$ is also a minimizing sequence for
the problem \eqref{eq}. According to Theorem \ref{T:exi}, there
exist a subsequence $\{\mu_{m_k}u_{m_k}(t_{m_k})\}$ of
$\{\mu_mu_m(t_m)\}$, and $\{g_{m_k}\}$ in $G_q$ such that
\begin{equation}\label{sta_2}
\|\mu_{m_k}u_{m_k}(t_{m_k})-g_{m_k}\|<\frac{\ep_0}{2},
\end{equation}
for sufficiently large $m_k$.
It follows that
\begin{equation*}
\begin{split}
\ep\leq\|u_{m_k}(t_{m_k})-g_{m_k}\|
&\leq\|u_{m_k}(t_{m_k})-\mu_{m_k}u_{m_k}(t_{m_k})\|+\|\mu_{m_k}u_{m_k}(t_{m_k})-g_{m_k}\|\\
&\leq|\mu_{m_k}-1|\|u_{m_k}(t_{m_k})\|+\frac{\ep_0}{2},
\end{split}
\end{equation*}
which leads to a contradiction since $\mu_m\rightarrow1$. Therefore,
the set $G_q$ is $\hrd$-stable with respect to NLS \eqref{nls}.
\end{proof}

\section*{Acknowledgements}
The author wishes to express his gratitude to Prof. Daomin Cao and Prof. Pigong Han
for several helpful comments and for many stimulating conversations.

\end{document}